\providecommand{\U}[1]{\protect\rule{.1in}{.1in}}
\providecommand{\U}[1]{\protect\rule{.1in}{.1in}}
\newtheorem{theorem}{Theorem}
\newtheorem{algorithm}[theorem]{Algorithm}
\newtheorem{assumption}[theorem]{Assumption}
\newtheorem{lemma}[theorem]{Lemma}
\newtheorem{remark}[theorem]{Remark}
 \definecolor{chl}{RGB}{0,0,205}  
 \definecolor{chl2}{RGB}{205,0,0}  
\def\ol{\overline}
\def\vc#1{\mathbf{#1}}
\begin{document}

\title{BDDC for Mixed-Hybrid Formulation of Flow in Porous Media with Combined Mesh Dimensions}
\author{
Jakub~{\v S}{\'\i}stek\affil{1}, 
Jan~B{\v r}ezina\affil{2} and
Bed\v{r}ich~Soused\'{i}k\affil{3}
}
\runningheads{J. {\v S}{\'{i}}stek, J.~B{\v r}ezina, B.~Soused\'{i}k}
{BDDC for Porous Media with Combined Mesh Dimensions}
\address{
\affilnum{1}
Institute of Mathematics,
Academy of Sciences of the Czech Republic,
\v Zitn\' a 25, 115~67~Prague~1, Czech Republic
\break\affilnum{2}
Institute for Nanomaterials, Advanced Technology and Innovation,
Technical University of Liberec,
Bendlova~1407/7,~461~17~Liberec~1, Czech Republic
\break\affilnum{3}
Department of Mathematics and Statistics, 
University of Maryland,
Baltimore County, 
1000~Hilltop~Circle,~Baltimore,~MD~21250,~USA
}
\cgs{
The research was supported 
by Czech Science Foundation under project \mbox{GA \v{C}R 14-02067S},
by Academy of Sciences of the Czech Republic through \mbox{RVO:67985840},
and 
by Ministry of Education, Youth and Sports under project CZ.1.05/2.1.00/01.0005.
J.~{\v S}\'{\i}stek acknowledges the computing time 
on \emph{HECToR} provided through the PRACE-2IP project (FP7 RI-283493). 
}
\corraddr{J.~{\v S}{\'\i}stek,
Institute of Mathematics,
Academy of Sciences of the Czech Republic,
\v Zitn\' a~25, 115~67~Prague~1, Czech Republic.
E-mail: sistek@math.cas.cz}
\begin{abstract}
We extend the Balancing Domain Decomposition by Constraints (BDDC) method
to flows in porous media discretised by mixed-hybrid finite elements with combined mesh dimensions.
Such discretisations appear when major geological fractures are modelled
by 1D or 2D elements inside three-dimensional domains.
In this set-up, the global problem as well as the substructure problems have a symmetric saddle-point structure,
containing a `penalty' block due to the combination of meshes.
We show that the problem can be reduced by means of iterative substructuring
to an interface problem, which is symmetric and positive definite.
The interface problem can thus be solved by conjugate gradients with the BDDC method as a preconditioner.
A~parallel implementation of this algorithm is incorporated into an existing software package for subsurface flow simulations.
We study the performance of the iterative solver on several academic and real-world problems.
Numerical experiments illustrate its efficiency and scalability.
\end{abstract}
\keywords
{Iterative substructuring, BDDC, saddle-point problems, mixed-hybrid methods, fractured porous media,
subsurface flow}%

\maketitle

\section{Introduction}

\label{sec:introduction} A detailed description of flow in porous media is
essential for building mathematical models with applications in, for example, water
management, oil and gas recovery, carbon dioxide (CO$_{2}$) sequestration or
nuclear waste disposal. In order to set up a reliable numerical model, one needs to
have a good knowledge of the problem geometry and input parameters.
For example, the flow of water in granite rock, which is a suitable site for 
nuclear waste disposal, is driven by the complex system
of vugs, cavities and fractures with various topology and sizes. These alter
the effective permeability, and therefore should be accurately accounted for
in the numerical model. There are two main approaches: either the fractures are
considered as free-flow regions, or the fractures contain debris and are
also modelled as porous media with specific permeabilities. In the first case,
a~unified approach to modelling free-flow and porous media regions can be
provided by the so called Stokes-Brinkman equation, which reduces to either
the Stokes or Darcy model in certain parameter limits, e.g., within the Multiscale
Mixed Finite-Element (MsMFE) framework~\cite{Gulbransen-2010-MMF}. In this
paper, we consider the latter case, and apply Darcy's law to the flow in
the reservoir and in the fractures as well; see~\cite{Martin-2005-MFB} for a
related approach. In either case, the preferential flow in large geological
dislocations and their intersections should be considered as two- and
one-dimensional flows, respectively. 
Due to the quite complex structure of the domains, 
the discretisation is performed using finite element methods~(FEM).
The resulting meshes are therefore unstructured, and they combine different
spatial dimensions (line elements in 1D, triangles in 2D, and tetrahedrons in
3D). The systems of linear equations obtained from the FEM discretisation are
often very large, so that using direct methods is prohibitive and iterative
solvers are warranted. The systems are typically also
{bad}-conditioned due to the mixing of spatial dimensions, large jumps in
permeability coefficients and presence of elements of considerably different
sizes, and so they are challenging for iterative solvers as well.

The matrices have a saddle-point structure%

\begin{equation}
\left[
\begin{array}
[c]{cc}%
A & \ol{B}^{T}\\
\ol{B} & -\ol{C}
\end{array}
\right]  , \label{eq:sp-matrix}%
\end{equation}
{where $A$ is symmetric positive definite on the kernel of $\ol{B}$,
$\ol{C}$ is symmetric positive semi-definite, and it is positive definite on the kernel of $\ol{B}^T$.}
The `penalty' 
block $\ol{C}\neq0$ arises from connecting meshes of
different spatial dimensions. The iterative solution of systems with this
structure is a frequently studied topic; see,
for example,~\cite{Benzi-2005-NSS,Dohrmann-2006-PBP,Tu-2005-BAM,Tu-2007-BAF}, the
monographs~\cite{Vassilevski-2008-MBF,Elman-2005-FEF} or~\cite[Chapter~9]%
{Toselli-2005-DDM} and the references therein. 
However, efficient methodologies for solving saddle-point problems are typically problem dependent.

In this paper, we develop a robust and scalable solver for linear systems with
the saddle-point structure as in~(\ref{eq:sp-matrix}) with the block$~\ol{C}$
either zero or nonzero. The solver is tailored to the mixed-hybrid formulation of
flow in porous media using the lowest order Raviart-Thomas ($RT_{0}$) finite
elements with combined mesh dimensions (1D, 2D and 3D). In particular, we
adapt the Balancing Domain Decomposition by Constraints (BDDC) method to this
type of problems.

The BDDC method is currently one of the most popular methods of iterative
substructuring. It has been proposed independently in
\cite{Cros-2003-PSC,Dohrmann-2003-PSC,Fragakis-2003-MHP}; 
see~\cite{Mandel-2007-BFM,Sousedik-2008-EPD} for the proof of equivalence.
Even though BDDC has been originally formulated for elliptic problems, it
has been successfully extended, for example, beyond elliptic
cases~\cite{Li-2006-BAI,Tu-2008-BDD} and to multiple
levels~\cite{Tu-2007-TBT3D,Mandel-2008-MMB}. An optimal set-up has been
studied
in~\cite{Klawonn-2008-AFA,Mandel-2007-ASF,Sistek-2012-FSC,Sousedik-2013-AMB}.
A closely related BDDC preconditioner for vector field problems discretised with Raviart-Thomas finite elements has been studied in~\cite{Oh-2013-BAR}.

We are interested in applications of the BDDC method to
saddle-point problems. If $\ol{C}=0$ in~(\ref{eq:sp-matrix}), one possible approach
is to use an algebraic trick and constrain the iterative solution of the
indefinite problem into a \emph{balanced} subspace, which is sometimes also
called \emph{benign}, where the operator is positive definite; 
see~\cite{Li-2006-BAI} for the Stokes problem,
and~\cite{Tu-2005-BAM,Sousedik-2013-NBS,Tu-2011-TBA} for flow in porous media.
However, due to the mixed-hybrid formulation and possible coupling of meshes
with different spatial dimensions, $\ol{C}\neq0$ in general, and we will favour 
an alternative, \emph{dual} approach here.

Our methodology is as follows. The mixed-hybrid formulation \cite{Maryska-1995-MHF,Oden-1977-DMH} is used in order
to modify the saddle-point problem to one which is symmetric and positive
definite {by means of iterative substructuring. 
In particular, we introduce a symmetric positive definite Schur complement with respect to
interface Lagrange multipliers, corresponding to a part of block~$\ol{C}$.}
The
reduced system is solved by the preconditioned conjugate gradient (PCG)
method, and the BDDC method is used as a preconditioner. From this
perspective, our work can be viewed as a further extension
of~\cite{Tu-2007-BAF}. Our main effort here is in accommodating the BDDC
solver to flows in porous media with combined mesh dimensions. 
In addition, the presentation of the BDDC algorithm is driven more by an efficient implementation,
while it is more oriented towards underlying theory in \cite{Tu-2007-BAF}.
We take
advantage of the special structure of the blocks in matrix~(\ref{eq:sp-matrix}%
) studied in detail in~\cite{Maryska-1995-MHF,Maryska-2000-SCS,
Maryska-2005-NSF}. 
In particular,
the nonzero structure of block $\ol{C}$ resulting from a combination of meshes with
different spatial dimensions is considered
in \cite{Brezina-2010-MHF}. 
We describe our parallel
implementation of the method and study its performance on several benchmark
and real-world problems. Another original contribution of this paper is
proposing a new scaling operator in the BDDC method suitable for the studied
problems. We note that if there is no coupling of meshes with different
spatial dimensions present in the discretisation, the block $\ol{C}=0$%
\ in~(\ref{eq:sp-matrix}) and our method is almost identical to the one introduced
in~\cite{Tu-2007-BAF}.

The paper is organised as follows. In Section~\ref{sec:model}, we introduce
the model problem. In Section~\ref{sec:fractures}, we describe the modelling of
fractured porous media and combining meshes of different dimensions. In
Section~\ref{sec:substructuring}, we introduce the substructuring components
and derive the interface problem. In Section~\ref{sec:bddc}, we formulate the
BDDC preconditioner. 
In addition, the selection of interface weights for BDDC is studied in detail in Section~\ref{sec:scaling}.
In Section~\ref{sec:solver}, we describe our parallel
implementation, and in Section~\ref{sec:numerical} we report the numerical
results and parallel performance for benchmark and engineering problems.
Finally, Section~\ref{sec:conclusion} provides a summary of our work.

Our notation does not, for simplicity, distinguish between finite element
functions and corresponding algebraic vectors of degrees of freedom, and
between linear operators and matrices within a specific basis---the meaning should be
clear from the context. The transpose of a matrix is denoted by superscript $^{T}$ and the energy
norm of a vector$~x$ is denoted\ by $\left\Vert x\right\Vert _{M}=\sqrt
{x^{T}Mx}$, where~$M$ is a symmetric positive definite matrix.

\section{Model problem}

\label{sec:model}

Let $\Omega$ be an open bounded {polyhedral} domain in $\mathbb{R}^{3}$. We are
interested in the solution of the following problem, combining Darcy's law
and the equation of continuity written as
\begin{align}
\Bbbk^{-1}\mathbf{u}+\nabla p  &  =-\nabla z\quad\text{in }\Omega
,\label{eq:problem-1}\\
\nabla\cdot\mathbf{u}  &  =f\quad\text{in }\Omega,\label{eq:problem-2}\\
p  &  =p_{N}\quad\text{on }\partial\Omega_{N},\label{eq:problem-3}\\
\mathbf{u}\cdot\mathbf{n}  &  =0\quad\text{on }\partial\Omega_{E},
\label{eq:problem-4}%
\end{align}
subject to boundary conditions on $\partial\Omega=\overline{\partial\Omega
}_{N}\cup\overline{\partial\Omega}_{E}$, where $\partial\Omega_{N}$ stands for
the part of the boundary with \emph{natural} (Dirichlet) boundary condition,
and $\partial\Omega_{E}$ for the part with \emph{essential} (Neumann) boundary
condition. In applications, the variable$~\mathbf{u}$ describes the velocity
of the fluid and $p$ the pressure (head) in an aquifer $\Omega$, $\Bbbk$ is a
symmetric positive definite tensor of the hydraulic conductivity, $-\nabla
z=(0,0,-1)^{T}$ is the gravity term, and $\mathbf{n}$ is the outer unit normal
vector of $\partial\Omega$. The term $\nabla z$ is present due to the fact,
that $\mathbf{u}$ satisfies $\mathbf{u}=-\Bbbk\nabla p_{h}$, where $p_{h}=p+z$
is the piezometric head. For a thorough\ discussion of application background
we refer, e.g., to monographs~\cite{Bear-1988-DFP,Chen-2006-CMM}.

Let $\mathcal{T}$ be the triangulation of domain $\Omega$ consisting of
$N_{E}$ simplicial elements with characteristic size $h$. We introduce a
space
\begin{equation}
\mathbf{H}(\Omega;\operatorname{div})=\left\{  \mathbf{v:v}\in L^{2}%
(\Omega);\ \nabla\cdot\mathbf{v}\in L^{2}(\Omega)\text{ and }\mathbf{v}%
\cdot\mathbf{n}=0\text{ on }\partial\Omega_{E}\right\}  ,
\end{equation}
equipped with the standard norm. Let $\mathbf{V}\subset\mathbf{H}%
(\Omega,\operatorname{div})$ be the space consisting of the lowest order
Raviart-Thomas ($RT_{0}$) functions and let $Q\subset L^{2}(\Omega)$ be the
space consisting of piecewise constant functions on the elements of the
triangulation $\mathcal{T}$. We refer, e.g., to 
monograph~\cite{Brezzi-1991-MHF} for a detailed description of the mixed
finite elements and the corresponding spaces.

In the \emph{mixed finite element approximation} of problem
(\ref{eq:problem-1})--(\ref{eq:problem-4}) we look for a pair $\left\{
\mathbf{u},p\right\}  \in\mathbf{V}\times Q$ that satisfies
\begin{align}
\int_{\Omega}\Bbbk^{-1}\mathbf{u}\cdot\mathbf{v}\,dx-\int_{\Omega}p\nabla
\cdot\mathbf{v}\,dx  &  =-\int_{\partial\Omega_{N}}p_{N}\mathbf{v}%
\cdot\mathbf{n}\,ds-\int_{\Omega}v_{z}\,dx,\quad\forall\mathbf{v}\in
\mathbf{V},\label{eq:mixed-1}\\
-\int_{\Omega}q\nabla\cdot\mathbf{u}\,dx  &  =-\int_{\Omega}fq\,dx,\quad
\forall q\in Q. \label{eq:mixed-2}%
\end{align}

In the discrete formulation, we need $p_{N}$ and $f$ only sufficiently regular
so that the integrals in the weak formulation~(\ref{eq:mixed-1}%
)--(\ref{eq:mixed-2}) make sense, namely $p_{N}\in L^{2}\left(  \partial
\Omega_{N}\right)  $, $f\in L^{2}\left(  \Omega\right)  $.

Next, we recall the mixed-hybrid formulation. It was originally motivated by
an effort to modify the saddle-point problem (\ref{eq:mixed-1}%
)--(\ref{eq:mixed-2}) to one which leads to symmetric positive definite
matrices. Nevertheless, this formulation is also suitable for a combination of
meshes with different spatial dimensions, which will be described in detail in
the next section.

Let $\mathcal{F}$ denote the set of inter-element \emph{faces} of the
triangulation $\mathcal{T}$. We now introduce several additional spaces.
First, let us define the space $\mathbf{V}^{-1}$ by relaxing the condition of
continuity of the normal components in the space $\mathbf{V}$ on inter-element
boundaries $\mathcal{F}$. More precisely, we define local spaces
$\mathbf{V}^{i}$ for each element $T^{i}\in\mathcal{T}$, $i=1,\ldots,N_{E}$,
by
\begin{equation}
\mathbf{V}^{i}=\left\{  \mathbf{v}\in\mathbf{H}(T^{i};\operatorname{div}%
):\mathbf{v}\in RT_{0}(T^{i})\right\}  ,
\end{equation}
and put $\mathbf{V}^{-1}=\mathbf{V}^{1}\times\cdots\times\mathbf{V}^{N_{E}}$.
Next, we define the space of Lagrange multipliers $\Lambda$ consisting of
functions that take constant values on individual inter-element faces in
$\mathcal{F}$,
\begin{equation}
\Lambda=\left\{  \lambda\in L^{2}\left(  \mathcal{F}\right)  :\lambda
=\mathbf{v}\cdot\mathbf{n}|_{\mathcal{F}},\;\mathbf{v\in V} \right\}.
\label{eq:lambda_space}
\end{equation}
In particular, $\lambda=0$ on $\partial\Omega$ for any $\lambda\in\Lambda$.

In the \emph{mixed-hybrid finite element approximation} of problem
(\ref{eq:problem-1})--(\ref{eq:problem-4}), we look for a triple $\left\{
\mathbf{u},p,\lambda\right\}  \in\mathbf{V}^{-1}\times Q\times\Lambda$ that
satisfies
\begin{align}
\sum_{i=1}^{N_{E}}\left[  \int_{T^{i}}\Bbbk_{i}^{-1}\mathbf{u}\cdot
\mathbf{v}\,dx-\int_{T^{i}}p\nabla\cdot\mathbf{v}\,dx+\int_{\partial
T^{i}\setminus\partial\Omega}\lambda(\mathbf{v}\cdot\mathbf{n})|_{\partial
T_{i}}\,ds\right]   &  \label{eq:hybrid-1}\\
= -\int_{\partial\Omega_{N}}p_{N}\mathbf{v}\cdot\mathbf{n}\,ds-\sum_{i=1}%
^{N_{E}}\int_{T^{i}}v_{z}\,dx,\quad\forall\mathbf{v}  &  \in\mathbf{V}^{-1}%
,\nonumber\\
-\sum_{i=1}^{N_{E}}\left[  \int_{T^{i}}q\nabla\cdot\mathbf{u}\,dx\right]   &
=-\int_{\Omega}fq\,dx,\quad\forall q\in Q,\label{eq:hybrid-2}\\
\sum_{i=1}^{N_{E}}\left[  \int_{\partial T^{i}\setminus\partial\Omega}%
\mu(\mathbf{u}\cdot\mathbf{n})|_{\partial T_{i}}\,ds\right]   &
=0,\quad\forall\mu\in\Lambda. \label{eq:hybrid-3}%
\end{align}
Equation (\ref{eq:hybrid-3}) imposes a continuity condition on the normal
component of the velocity (also called \emph{normal flux}) $\mathbf{u}\cdot\mathbf{n}$ across $\mathcal{F}$ which guarantees that
$\mathbf{u}\in\mathbf{V}$. This condition also implies the equivalence of the
two formulations (\ref{eq:mixed-1})--(\ref{eq:mixed-2}) and (\ref{eq:hybrid-1}%
)--(\ref{eq:hybrid-3}). We note that the Lagrange multipliers $\lambda$ can be
interpreted as the approximation of the trace of $p$ on $\mathcal{F}$,
see~\cite{Cowsar-1995-BDD} for details.

Let us now write the matrix formulation corresponding to (\ref{eq:hybrid-1}%
)--(\ref{eq:hybrid-3}) as
\begin{equation}
\left[
\begin{array}
[c]{ccc}%
A & B^{T} & B_{\mathcal{F}}^{T}\\
B & 0 & 0\\
B_{\mathcal{F}} & 0 & 0
\end{array}
\right]  \left[
\begin{array}
[c]{c}%
\mathbf{u}\\
p\\
\lambda
\end{array}
\right]  =\left[
\begin{array}
[c]{c}%
g\\
f\\
0
\end{array}
\right]  . \label{eq:hybrid-m}%
\end{equation}
It is important to note that $A$ is block diagonal with $N_{E}$ blocks,
corresponding to elements $T^{i}$, $i=1,\dots,N_{E}$, and each of the blocks
is symmetric positive definite, cf. the first term in~(\ref{eq:hybrid-1}).
It was shown in~\cite{Maryska-2000-SCS} that the system of equations (\ref{eq:hybrid-m})
can be reduced (twice) to the Schur complement corresponding to the Lagrange
multipliers $\lambda$ and solved efficiently by a direct or iterative solver.
Here, we will look for an efficient solution of a slightly modified, and in
general also block dense, system which is introduced in the next section.

\section{Modelling of fractures}

\label{sec:fractures} In this section, we recall the main ideas of the
discrete model of the flow in fractured porous media that is based on
connection of meshes of different dimensions as described
in~\cite{Brezina-2010-MHF}. Let us denote the full domain by $\Omega
_{3}=\Omega$. Next, consider lower-dimensional domains $\Omega_{d-1}%
\subset\Omega_{d}$, $d=2,3$, such that $\Omega_{2}$ consists of polygons and
$\Omega_{1}$ consists of line segments. We will also assume that
$\partial\Omega_{1}\subset\partial\Omega_{2}\subset\partial\Omega_{3}$. 
The first condition requires that a domain of a lower dimension cannot poke out of
the domain of higher dimension, 
while the second condition prevents domains of lower dimension from having boundaries  
in the interior of domains of higher dimension.
We impose these conditions to avoid technical difficulties in the analysis. 
However, numerical
evidence suggests that these conditions are not necessary, and in fact, they are not
satisfied for the real-world problems presented in
Section~\ref{sec:results_engineering}.

For every dimension $d=1,2,3$, we introduce a triangulation $\mathcal{T}_{d}$
of the domain $\Omega_{d}$ that consists of finite elements $T_{d}^{i},$\ $i =
1,\dots,N_{E}^{d}$ and satisfies the compatibility conditions
\begin{gather}
T_{d-1}^{i} \subset\mathcal{F}_{d}, \quad\text{where } \mathcal{F}_{d}=
\bigcup_{k} \partial T_{d}^{k} \setminus\partial\Omega_{d},\\
\quad T_{d-1}^{i} \cap\partial T^{j}_{d} \text{ is either $T_{d-1}^{i}$ or
$\emptyset$},
\end{gather}
for every $i\in\{1,\dots, N_{E}^{d-1}\}$, $j\in\{1,\dots,N_{E}^{d}\}$, and
$d=2,3$.
This means that elements of a lower dimension match faces of elements of the higher dimension.

We consider equations (\ref{eq:problem-2})--(\ref{eq:problem-4}) on the domains
$\Omega_{d}$, $d=1,2,3$, completed by a slight modification of the Darcy's
law~(\ref{eq:problem-1}):
\begin{equation}
\label{eq:darcy-law-fracture}
\Bbbk_{d}^{-1}\frac{\mathbf{u}_{d}}{\delta_{d}}+\nabla p_{d}=-\nabla z,
\end{equation}
where $\mathbf{u}_{d}$ stands for the 
velocity integrated over the cross-section for $d=1,2$,
i.e. the units of $\mathbf{u}_{3}$, $\mathbf{u}_{2}$, and $\mathbf{u}_{1}$ are ms$^{-1}$, m$^{2}$s$^{-1}$, and m$^{3}$s$^{-1}$, respectively.
{In addition, $\delta_{3}=1$, $\delta_{2}$ is the thickness of a fracture, and $\delta_{1}$ is the cross-section of a 1D preferential channel.}
The effective fluid source $f_{2}$
on $\Omega_{2}$ is given as
\begin{equation}
\label{eq:comm-term-2}
f_{2}=\delta_{2}\tilde{f}_{2}+\mathbf{u}_{3}^{+}\cdot\mathbf{n}^{+}%
+\mathbf{u}_{3}^{-}\cdot\mathbf{n}^{-},
\end{equation}
where 
$\tilde{f}_{2}$ is the
density of external fluid sources, and the normal fluxes from the two faces of the
3D continuum surrounding the fracture are given through the Robin (also called
Newton) boundary conditions
\begin{align}
\mathbf{u}_{3}^{+}\cdot\mathbf{n}^{+}  &  =\sigma_{3}^{+}(p_{3}^{+}%
-p_{2}),\label{eq:newton-1}\\
\mathbf{u}_{3}^{-}\cdot\mathbf{n}^{-}  &  =\sigma_{3}^{-}(p_{3}^{-}-p_{2}).
\label{eq:newton-2}%
\end{align}
In the last formula, $\sigma_{3}^{+/-}>0$ are the transition coefficients (cf.
\cite{Martin-2005-MFB} for possible choices) and $p_{3}^{+}$, $p_{3}^{-}$ are
the traces of pressure $p_{3}$ on the two sides of the fracture. The effective
fluid source $f_{1}$ on $\Omega_{1}$ is similar,
\begin{equation}
\label{eq:comm-term-1}
f_{1}=\delta_{1}\tilde{f}_{1}+\sum_{k}\mathbf{u}_{2}^{k}\cdot\mathbf{n}^{k},
\end{equation}
where 
$\tilde{f}_{1}$ is the density of external fluid sources. In the 3D ambient
space, the 1D channel can be connected to $k$ faces of 2D fractures. Thus
\begin{equation}
\mathbf{u}_{2}^{k}\cdot\mathbf{n}^{k}=\sigma_{2}^{k}(p_{2}^{k}-p_{1})
\label{eq:newton-3}%
\end{equation}
is the normal flux from the connected fracture $k$, $\sigma_{2}^{k}>0$ is the transition
coefficient, and $p_{2}^{k}$ is the trace of pressure $p_{2}$ on the face of
fracture $k$.

In the following, we describe the discrete mixed-hybrid formulation of the
problem. The formulation and discussion of the continuous problem can be found
in~\cite{Brezina-2010-MHF}. Let us consider spaces%
\begin{equation}
\mathbf{V}^{-1}=\mathbf{V}_{1}^{-1}\times\mathbf{V}_{2}^{-1}\times
\mathbf{V}_{3}^{-1},\ \mathbf{V}_{d}^{-1}=\prod_{i=1}^{N_{E}^{d}}%
\mathbf{V}^{i}(T_{d}^{i}),\quad Q=Q_{1}\times Q_{2}\times Q_{3},\ 
Q_{d}=L^{2}\left(  \Omega_{d}\right)  .
\end{equation}
For the definition of the space $\Lambda$, we cannot follow
(\ref{eq:lambda_space}) directly, since e.g. on $\Omega_{2}$, we need to
distinguish values of~$\lambda_{3}$ on two sides of a fracture. Thus, we
introduce a separate value for every non-boundary side of every element:
\begin{equation}
\Lambda(T_{d}^{i})=\left\{  \lambda\in L^{2}(\partial T_{d}^{i} \setminus \partial\Omega_d):\lambda
=\mathbf{v}\cdot\mathbf{n}|_{\partial T_{d}^{i}},\mathbf{v}\in\mathbf{V}%
_{d}\right\}  ,
\end{equation}
where $\mathbf{V}_{d}$ is defined in the same way as the space $\mathbf{V}$
but on the domain $\Omega_{d}$. Further, we identify values on faces/points
that are not aligned to the fractures/channels:
\begin{equation}
\Lambda_{d}=\Big\{\lambda\in\prod_{i=1}^{N_{E}^{d}}\Lambda(T_{d}%
^{i});\ \lambda|_{\partial T_{d}^{i}}=\lambda|_{\partial T_{d}^{j}}%
\quad\text{on face }F=\partial T_{d}^{i}\cap\partial T_{d}^{j}\quad\text{ if
}F\cap\Omega_{d-1}=\emptyset\Big\}.
\end{equation}
Finally, we redefine $\Lambda=\Lambda_{1}\times\Lambda_{2}\times\Lambda_{3}$.
In the \emph{mixed-hybrid finite element approximation} of the flow in
fractured porous media we seek a triple $\left\{  \mathbf{u},p,\lambda
\right\}  \in\mathbf{V}^{-1}\times Q\times\Lambda$ that satisfies
\begin{align}
a\left(  \mathbf{u},\mathbf{v}\right)  +b\left(  p,\mathbf{v}\right)
+b_{\mathcal{F}}\left(  \lambda,\mathbf{v}\right)   &  =\left\langle
g,\mathbf{v}\right\rangle ,\qquad\forall\mathbf{v\in V}^{-1}%
,\label{eq:hybrid-frac-1}\\
b\left(  \mathbf{u},q\right)  -c\left(  p,q\right)  -c_{\mathcal{F}%
}\left(  q,\lambda\right)   &  =\left\langle f,q\right\rangle
,\qquad\forall q\in Q,\label{eq:hybrid-frac-2}\\
b_{\mathcal{F}}\left(  \mathbf{u},\mu\right)  -c_{\mathcal{F}}\left(
p,\mu\right)  -\widetilde{c}\left(  \lambda,\mu\right)   &  =0,\qquad
\qquad\forall\mu\in\Lambda, \label{eq:hybrid-frac-3}
\end{align}
with
\begin{align}
a\left(  \mathbf{u},\mathbf{v}\right)   &  =\sum_{d=1}^{3}\sum_{i=1}%
^{N_{E}^{d}}\left[  \int_{T_{d}^{i}}\frac{1}{\delta_{d}}\Bbbk_{d}%
^{-1}\mathbf{u}_{d}\cdot\mathbf{v}_{d}\,dx\right]  ,\label{eq:weak_term_a}\\
b\left(  \mathbf{u},q\right)   &  =-\sum_{d=1}^{3}\sum_{i=1}^{N_{E}^{d}%
}\left[  \int_{T_{d}^{i}}q_{d}\,\left(  \nabla\cdot\mathbf{u}_{d}\right)
\,dx\right]  ,\\
b_{\mathcal{F}}\left(  \mathbf{u},\lambda\right)   &  =\sum_{d=1}^{3}%
\sum_{i=1}^{N_{E}^{d}}\left[  \int_{\partial T_{d}^{i}\setminus\partial
\Omega_{d}}\lambda|_{\partial T_{d}^{i}}\,\left(  \mathbf{u}_{d}%
\cdot\mathbf{n}\right)  \,ds\right]  ,\\
c\left(  p,q\right)   &  =\sum_{d=2}^{3}\sum_{i=1}^{N_{E}^{d}%
}\left[  \int_{\partial T_{d}^{i}\cap\Omega_{d-1}}\sigma_{d}\,\,p_{d-1}%
\,q_{d-1}\,ds\right]  ,\label{eq:weak_term_cbar}\\
c_{\mathcal{F}}\left(  p,\mu\right)   &  =-\sum_{d=2}^{3}\sum_{i=1}^{N_{E}%
^{d}}\left[  \int_{\partial T_{d}^{i}\cap\Omega_{d-1}}\sigma_{d}\ p_{d-1}%
\,\mu_{d}\,ds\right]  ,\label{eq:weak_term_cF}\\
\widetilde{c}\left(  \lambda,\mu\right)   &  =\sum_{d=2}^{3}\sum_{i=1}%
^{N_{E}^{d}}\left[  \int_{\partial T_{d}^{i}\cap\Omega_{d-1}}\sigma
_{d}\,\,\lambda_{d}\,\mu_{d}\,ds\right]  ,\label{eq:weak_term_ctilde}\\
\left\langle g,\mathbf{v}\right\rangle  &  =-\sum_{d=1}^{3}\sum_{i=1}%
^{N_{E}^{d}}\int_{\partial T_{d}^{i}\cap\partial\Omega_{N}}p_{N}\,\left(
\mathbf{v}\cdot\mathbf{n}\right)  \,ds 
{-\sum_{d=1}^{3}\sum_{i=1}^{N_{E}^{d}}\int_{T_{d}^{i}}v_{z}\,dx},\\
\left\langle f,q\right\rangle  &  =-\sum_{d=1}^{3}\int_{\Omega
}\delta_{d}\,\tilde{f}_{d}\,q_{d}\,dx. \label{eq:weak_term_f}%
\end{align}

The system~(\ref{eq:hybrid-frac-1})--(\ref{eq:hybrid-frac-3}) now leads to the
matrix form
\begin{equation}
\left[
\begin{array}
[c]{ccc}%
A & B^{T} & B_{\mathcal{F}}^{T}\\
B & -C & -C_{\mathcal{F}}^{T}\\
B_{\mathcal{F}} & -C_{\mathcal{F}} & -\widetilde{C}%
\end{array}
\right]  \left[
\begin{array}
[c]{c}%
\mathbf{u}\\
p\\
\lambda
\end{array}
\right]  =\left[
\begin{array}
[c]{c}%
g\\
f\\
0
\end{array}
\right]  . \label{eq:hybrid-m-fractured}%
\end{equation}

We note that~(\ref{eq:hybrid-m-fractured}) is related
to~(\ref{eq:hybrid-frac-1})--(\ref{eq:hybrid-frac-3}) in the same way
as~(\ref{eq:hybrid-m}) is related to~(\ref{eq:hybrid-1})--(\ref{eq:hybrid-3}).
The main difference in the structure of the matrices
between (\ref{eq:hybrid-m-fractured}) and~(\ref{eq:hybrid-m}) is the additional
block $\ol{C}=\left[
\begin{array}
[c]{cc}%
C & C_{\mathcal{F}}^{T}\\
C_{\mathcal{F}} & \widetilde{C}%
\end{array}
\right]  $ related to the normal fluxes between dimensions and arising
from~(\ref{eq:newton-1})--(\ref{eq:newton-2}) and (\ref{eq:newton-3}) via
(\ref{eq:weak_term_cbar})--(\ref{eq:weak_term_ctilde}).
{
In particular, 
the modified right-hand side of the continuity equation for two-dimensional and one-dimensional  
elements, $f_{2}$ and $f_{1}$, incorporates pressure unknowns on 2D and 1D  elements, 
and traces of pressure on 3D and 2D elements at the fracture, 
which are nothing but the Lagrange multipliers on 3D and 2D elements in the mixed-hybrid method.
Consequently, 
$p_{3}^{+/-} = \lambda_{3}^{+/-}$ in~(\ref{eq:newton-1})--(\ref{eq:newton-2}), and $p_{2}^{k} = \lambda_{2}^{k}$ in~(\ref{eq:newton-3}).
}

Assuming~$\delta_{d}$ bounded and greater than zero, and using the fact
that~$\Bbbk_{d}$ corresponds to a symmetric positive definite matrix, we see
from (\ref{eq:weak_term_a}) that block $A$ in~(\ref{eq:hybrid-m-fractured}) is
symmetric positive definite. Block $\ol{C}$ is symmetric positive semi-definite
since
\begin{equation}
\label{eq:C_semi_definite}c(p,p) + 2c_{\mathcal{F}}(p,\lambda) +
\widetilde{c}(\lambda, \lambda) = \sum_{d=2}^{3}\sum_{i=1}^{N_{E}^{d}}\left[
\int_{\partial T_{d}^{i} \cap\Omega_{d-1}} \sigma_{d} (p_{d-1} - \lambda
_{d})^{2} \,ds \right]  .
\end{equation}

The following theorem is a standard result, e.g. \cite[Theorem~1.2]%
{Brezzi-1991-MHF}. Here, we rewrite it in a form suitable for our setting and
we verify the assumptions for the specific blocks of the matrix in
(\ref{eq:hybrid-m-fractured}).
{
We will further denote $\ol{Q} = Q\times \Lambda$, 
$\ol{p}=(p,\lambda) \in \ol{Q}$, $\ol{q}=(q, \mu) \in \ol{Q}$, and 
$\ol{b}(\vc u, \ol{q})=b(\vc u, q) + b_{\mathcal{F}}(\vc u, \mu)$. 
}

\begin{theorem}
\label{th:regular} Let natural boundary conditions (\ref{eq:problem-3}) be
prescribed at a certain part of the boundary, i.e. $\partial\Omega_{N,d}
\neq\emptyset$ for at least one $d\in\{1,2,3\}$. Then the discrete
mixed-hybrid problem~(\ref{eq:hybrid-m-fractured})
has a unique solution.
\end{theorem}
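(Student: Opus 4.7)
The plan is to regroup (\ref{eq:hybrid-m-fractured}) as a symmetric generalized saddle-point system with penalty and to apply a Brezzi-type argument (cf.~\cite[Theorem~1.2]{Brezzi-1991-MHF}) whose hypotheses I verify directly for the present blocks. In the grouped notation introduced above, the matrix takes the block form $\begin{bmatrix} A & \ol{B}^{T}\\ \ol{B} & -\ol{C}\end{bmatrix}$, which is square and symmetric; consequently, existence and uniqueness both reduce to showing that the homogeneous problem has only the trivial solution. The text has already established that $A$ is symmetric positive definite (from~(\ref{eq:weak_term_a}) using $\Bbbk_{d}$ SPD and $\delta_{d}>0$) and $\ol{C}$ is symmetric positive semi-definite (from~(\ref{eq:C_semi_definite})), so the injectivity analysis is all that remains.

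Let $(\mathbf{u},\ol{p})$ satisfy the homogeneous system. Testing the first block row with $\mathbf{u}$ and the second with $\ol{p}$ and subtracting yields the energy identity
\[
\mathbf{u}^{T}A\mathbf{u}+\ol{p}^{T}\ol{C}\ol{p}=0.
\]
Since both terms are non-negative, $\mathbf{u}=0$ and $\ol{p}\in\ker(\ol{C})$; substituting $\mathbf{u}=0$ into the first row further gives $\ol{B}^{T}\ol{p}=0$. Thus the theorem reduces to verifying that $\ker(\ol{B}^{T})\cap\ker(\ol{C})=\{0\}$ under the hypothesis $\partial\Omega_{N,d^{\ast}}\neq\emptyset$ for some $d^{\ast}\in\{1,2,3\}$.

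I would characterise the two kernels separately. By~(\ref{eq:C_semi_definite}), an element $(p,\lambda)\in\ker(\ol{C})$ if and only if $p_{d-1}=\lambda_{d}|_{\partial T_{d}^{i}}$ on every coupling face $\partial T_{d}^{i}\cap\Omega_{d-1}$, i.e.~the lower-dimensional pressure coincides with the higher-dimensional Lagrange multiplier at each dimensional interface. For $\ker(\ol{B}^{T})$, the relation $\ol{b}(\mathbf{v},\ol{p})=0$ for all $\mathbf{v}\in\mathbf{V}^{-1}$ is tested with $RT_{0}$ basis functions localised to a single element $T_{d}^{i}$ and having unit normal flux on exactly one face. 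After integration by parts, together with $\nabla p_{d}=0$ since $p_{d}$ is piecewise constant, the identity reduces to $\lambda|_{F}=p_{d}$ on every interior face $F$ of $T_{d}^{i}$ and $p_{d}=0$ whenever $T_{d}^{i}$ has a face lying in $\partial\Omega_{N,d}$.

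The main obstacle is propagating the vanishing across dimensions. First, combining $\lambda|_{F}=p_{d^{\ast}}$ on interior faces with connectivity of $\mathcal{T}_{d^{\ast}}$ and the boundary identity $p_{d^{\ast}}=0$ on elements touching $\partial\Omega_{N,d^{\ast}}$ yields $p_{d^{\ast}}\equiv 0$ on $\Omega_{d^{\ast}}$ and $\lambda_{d^{\ast}}\equiv 0$ on all interior faces of $\mathcal{T}_{d^{\ast}}$. To sweep across dimensions, I invoke $\ker(\ol{C})$: faces of elements of $\mathcal{T}_{d^{\ast}}$ meeting $\Omega_{d^{\ast}-1}$ carry $\lambda_{d^{\ast}}=0$, forcing $p_{d^{\ast}-1}=0$ on the adjacent lower-dimensional elements; symmetrically, elements of $\mathcal{T}_{d^{\ast}+1}$ with a face inside $\Omega_{d^{\ast}}$ satisfy $\lambda_{d^{\ast}+1}^{\pm}=p_{d^{\ast}}=0$. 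The compatibility conditions $\partial\Omega_{1}\subset\partial\Omega_{2}\subset\partial\Omega_{3}$ and $T_{d-1}^{i}\subset\mathcal{F}_{d}$ from Section~\ref{sec:fractures} ensure the composite mesh is linked through these dimensional couplings, so iterating drives $p\equiv 0$ across all three dimensions, and then $\lambda|_{F}=p$ on interior faces gives $\lambda\equiv 0$. Hence $\ol{p}=0$, proving injectivity.
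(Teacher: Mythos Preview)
Your overall strategy---deriving the energy identity $\mathbf{u}^{T}A\mathbf{u}+\ol{p}^{T}\ol{C}\,\ol{p}=0$ and reducing the question to $\ker(\ol{B}^{T})\cap\ker(\ol{C})=\{0\}$---is correct and arguably more direct than the paper's route through a block congruence and Sylvester's law of inertia; both land on the same kernel condition. Your face-wise characterisations of the two kernels are also right.

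The gap is in the propagation. You assert that $\lambda|_{F}=p_{d^{\ast}}$ on interior faces, together with connectivity of $\mathcal{T}_{d^{\ast}}$, yields $p_{d^{\ast}}\equiv 0$ on all of $\Omega_{d^{\ast}}$. But on an interior face $F$ lying on a fracture (i.e.\ $F\cap\Omega_{d^{\ast}-1}\neq\emptyset$), the Lagrange multiplier is \emph{double-valued} by the definition of $\Lambda_{d^{\ast}}$, so the relation $\lambda|_{F}=p_{d^{\ast}}$ holds separately on each side and does not force $p_{d^{\ast}}$ to match across the fracture. Hence within a fixed dimension $p$ is only constant on each \emph{component}---a maximal set of elements connected through non-fracture faces---and vanishes only on the component touching $\partial\Omega_{N,d^{\ast}}$. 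Your subsequent sweep to dimensions $d^{\ast}\pm 1$ never returns to pick up the remaining components of $\Omega_{d^{\ast}}$. The paper handles this by explicitly introducing components, identifying $\ker(\ol{B}^{T})$ with the span of their characteristic vectors, and then using the $\ker(\ol{C})$ relation $p_{d-1}=\lambda_{d}$ to show that components of adjacent dimensions sharing a fracture face carry the same constant; the constants therefore propagate through the whole component graph, which is connected, and at least one of them is zero by hypothesis. Your iteration would need precisely this back-and-forth across each fracture (down one dimension via $\ker(\ol{C})$, then back up on the other side via $\ker(\ol{B}^{T})$) to reach every component.
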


\def\component{\Omega^c_k}
\begin{proof}
{
Let us first investigate the structure of the matrix in 
(\ref{eq:hybrid-m-fractured}) more closely. 
Let us number the unknowns within each block of (\ref{eq:hybrid-m-fractured}) 
with respect to spatial dimension $d\in\{1,2,3\}$. The matrix then takes the 
form of 9$\times $9 blocks,
\begin{equation}
\label{eq:crazy_blocks_matrix}
\left[
\begin{matrix}
A_{11}             &                    &                    & B^T_{11}            &                     &            & B^T_{\mathcal{F},11}  &                       &                       \\
                   & A_{22}             &                    &                     & B^T_{22}            &            &                       & B^T_{\mathcal{F},22}  &                       \\
                   &                    & A_{33}             &                     &                     &  B^T_{33}  &                       &                       & B^T_{\mathcal{F},33}  \\
B_{11}             &                    &                    & -C_{11}             &                     &            &                       & -C^T_{\mathcal{F},12} &                       \\
                   & B_{22}             &                    &                     & -C_{22}             &            &                       &                       & -C^T_{\mathcal{F},23} \\
                   &                    & B_{33}             &                     &                     &            &                       &                       &                       \\
B_{\mathcal{F},11} &                    &                    &                     &                     &            &                       &                       &                       \\
                   & B_{\mathcal{F},22} &                    & -C_{\mathcal{F},12} &                     &            &                       & -\widetilde{C}_{22}   &                       \\
                   &                    & B_{\mathcal{F},33} &                     & -C_{\mathcal{F},23} &            &                       &                       & -\widetilde{C}_{33}   \\
\end{matrix}
\right].
\end{equation}
Suppose for a~moment that we solve a~problem only on domain $\Omega_{d}$, $d \in {1,2,3}$ (i.e. $\Omega_{i} = \emptyset$ for $i\ne d$). 
If no {natural} boundary conditions are imposed, there is a single 
$-1$ entry on each row of $B^T_{dd}$ and a single $+1$ entry on each row of 
$B^T_{\mathcal{F},dd}$.
Since $\Omega_{d}$ is a simply connected set, 
the matrix
$\ol{B}^T_{dd} = \left[
\begin{smallmatrix}
B^T_{dd} & B^T_{\mathcal{F},dd}
\end{smallmatrix}
\right]$
has a nontrivial nullspace of constant vectors.
Enforcing {natural} boundary condition on a part of $\Omega_d$ changes the
$+1$ value on the corresponding row of matrix $B^T_{\mathcal{F},dd}$ to $0$, 
in which case $\ol{B}^T_{dd}$ has only a trivial nullspace, i.e. full column 
rank (see e.g. \cite[Section IV.1]{Brezzi-1991-MHF} or 
\cite[Lemma~3.2]{Maryska-1995-MHF}).

The nullspace becomes more complicated for domains with fractures, in which 
case $\Omega_{d}$ typically has more simply connected components 
separated by fractures (cf. Fig.~\ref{fig:schemes}). Let us denote them 
$\component$, 
$k=1,\dots, n_c$, regardless of the dimension. In particular, $\component$, 
$k=1,\dots, n_{ci}$ will be components without natural condition boundary, i.e.
$\partial \component \cap \partial \Omega_{N}=\emptyset$, while for 
$k=n_{ci}+1, 
\dots, n_c$ we get components with prescribed natural boundary condition.
We also denote
$\ol\chi_k\in\ol{Q}$ the characteristic vector of the component $\component$ 
that takes value $+1$  for degrees of freedom associated with elements and faces
of the component $\component$. With such notation, the basis of the 
nullspace of the whole matrix
\begin{equation}
\label{eq:blocks_matrix}
\ol{B}^T =
\left[
\begin{matrix}
B^T_{11}            &                     &            & B^T_{\mathcal{F},11}  & 
                      &                       \\
                    & B^T_{22}            &            &                       & 
B^T_{\mathcal{F},22}  &                       \\
                    &                     &  B^T_{33}  &                       & 
                      & B^T_{\mathcal{F},33}  \\
\end{matrix}
\right]
\end{equation}
consists of characteristic vectors $\ol{\chi}_k$, $k=1,\dots n_{ci}$ and has 
dimension $n_{ci}$.

Next, we show that matrix $\ol{C}$ is not only symmetric positive 
semi-definite, as seen from (\ref{eq:C_semi_definite}), but also positive definite on 
$\text{null}\,\ol{B}^T$. To this end, take $\ol{p} \in 
\mbox{null}\,\ol{B}^T$, a vector  
that is piecewise constant on components, having value $\ol{p}_{k,d}$ on the 
component $\component$ of dimension $d$ for $k=1,\dots, n_{ci}$, and value 
$\ol{p}_{k,d}=0$ for other components. 
Then $\ol{p}^T \ol{C}\ol{p}=0$ implies 
$\ol{p}=\ol{p}_{k,d}=0$. 
Indeed, every component $\component$ of dimension $d=2,3$ 
has some component $\Omega^c_j$ of dimension $d-1$ on its boundary,
and therefore all $\ol{p}_{k,d}$ have the same value, cf. (\ref{eq:C_semi_definite}). 
This value is zero, since there is at least one component with natural boundary condition.

Applying the congruence transformation,
\begin{equation}
\label{eq:congruence1}
\left[
\begin{matrix}
A        & \ol{B}^T  \\
\ol{B}   & -\ol{C}
\end{matrix}
\right]
=
\left[
\begin{matrix}
I                 &   \\
\ol{B} A^{-1} & I
\end{matrix}
\right]
\left[
\begin{matrix}
A &   \\
& -\left( \ol{B}A^{-1}\ol{B}^T + \ol{C} \right)
\end{matrix}
\right]
\left[
\begin{matrix}
I    & A^{-1} \ol{B}^T \\
& I
\end{matrix}
\right].
\end{equation}
Matrix $A$ is symmetric positive definite (SPD) from (\ref{eq:weak_term_a})
and therefore  $\ol{B}A^{-1}\ol{B}^T$ is SPD on range $\ol{B}$, which is 
the orthogonal complement of the nullspace of $\ol{B}^T$.
Thus $\ol{B}A^{-1}\ol{B}^T + \ol{C}$ is SPD on whole $\ol{Q}$.
From the Sylvester law of inertia, the number of positive, negative and zero 
eigenvalues is preserved by the congruence transformation.
Since the block diagonal matrix on the right-hand side of~(\ref{eq:congruence1}) has only (strictly) positive and (strictly) negative eigenvalues,
the matrix on the left-hand side also must be nonsingular,
and problem~(\ref{eq:hybrid-m-fractured}) has a unique solution.

}
\end{proof}

\begin{figure}[ptbh]
\begin{center}
\includegraphics[width=0.25\textwidth]{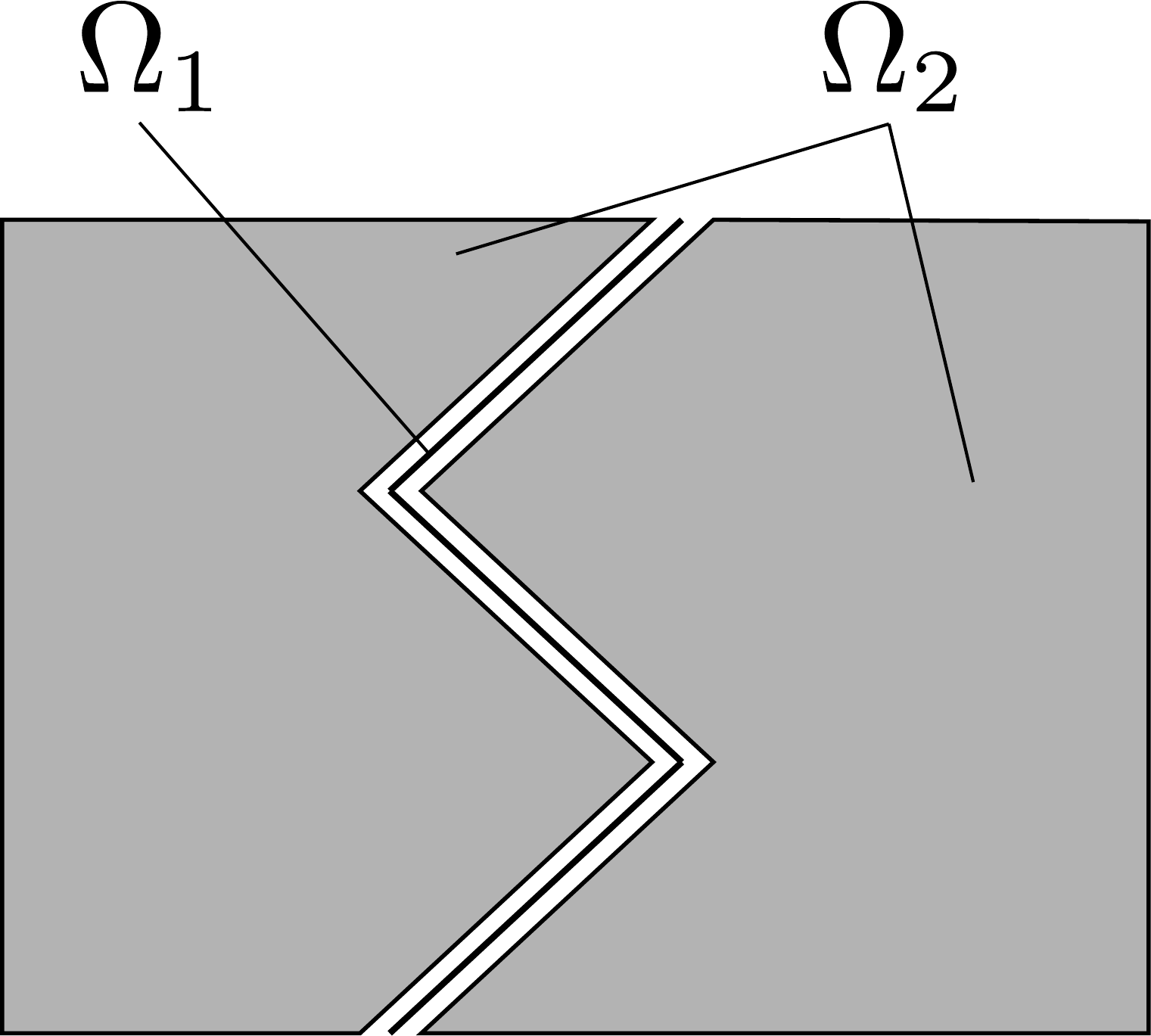} \hspace{25mm} 
\includegraphics[width=0.30\textwidth]{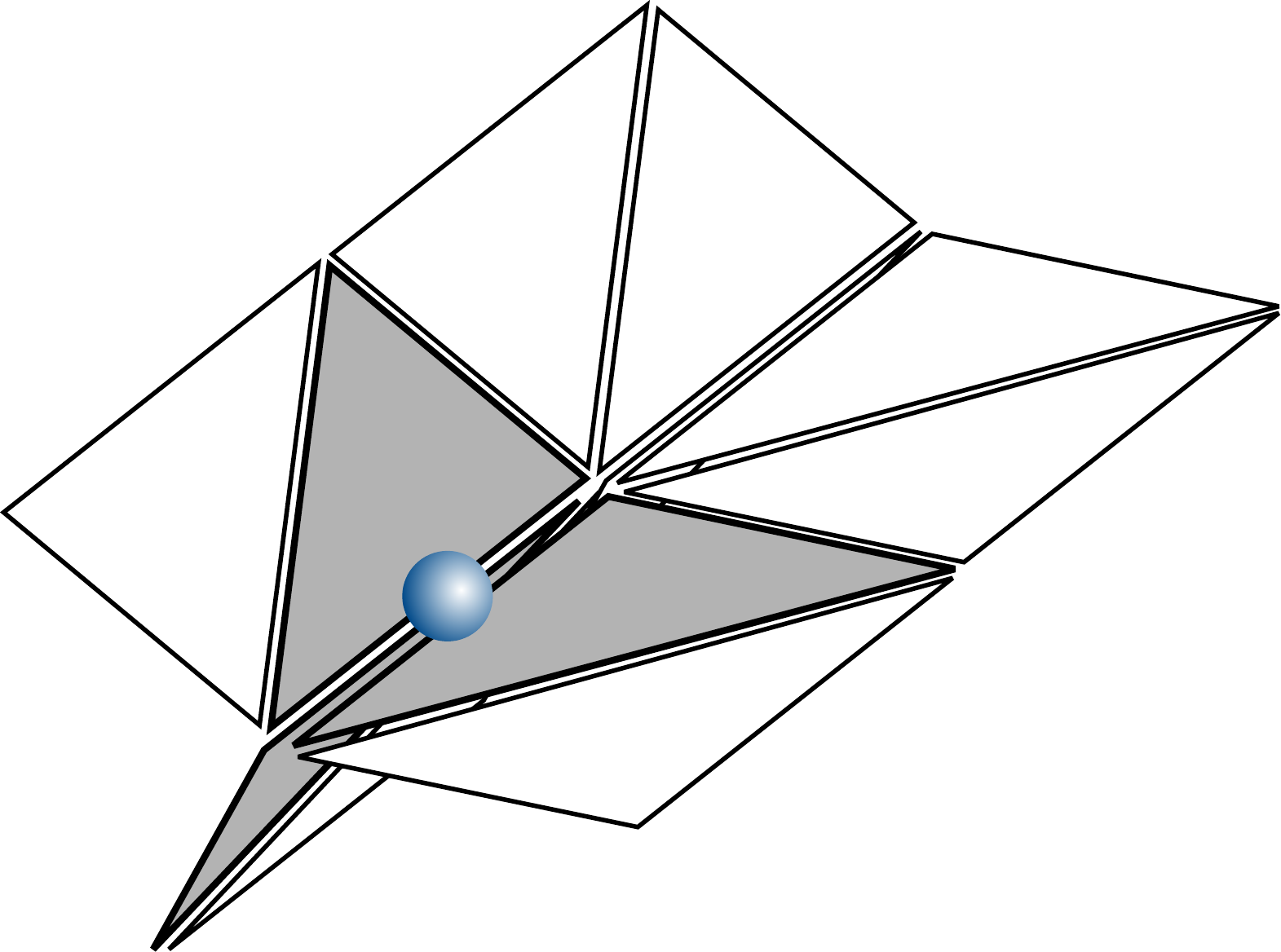}
\end{center}
\caption{\label{fig:schemes}
Example of a two-dimensional problem: even single fracture gives rise to two components of the two-dimensional mesh (left);
example of three (shaded) triangles sharing a~single Lagrange multiplier in 3D (right).}
\end{figure}

\section{Iterative substructuring}

\label{sec:substructuring}

For our purposes of combining meshes with different spatial dimensions, we
define \emph{substructures} as subsets of finite elements in the mesh rather
than parts of a physical domain, cf.~\cite{Toselli-2005-DDM}.

To begin, let us define the combined triangulation $\mathcal{T}_{123}$ as the
union of triangulations for each spatial dimension, i.e., $\mathcal{T}%
_{123}=\mathcal{T}_{1}\cup\mathcal{T}_{2}\cup\mathcal{T}_{3}$. The
triangulation $\mathcal{T}_{123}$ is subsequently divided into
substructures$~\Omega^{i},\;i=1,\dots,N_{S}$. Note that in general a
substructure can contain finite elements of different dimensions. We
define the interface $\Gamma$ as the set of degrees of freedom shared by more
than one substructure. 
Note that $\Gamma \subset \Lambda$ for the current setting.
Thus, let us split the Lagrange multipliers$~\lambda$ into
two subsets. First, we denote by$~\lambda_{\Gamma}$ the set of multipliers
corresponding to the interface$~\Gamma$. The remaining multipliers,
corresponding to substructure interiors, will be denoted by$~\lambda_{I}$. The
interface$~\Gamma^{i}$ of substructure$~\Omega^{i}$ is defined as a subset
of$~\Gamma$ corresponding to$~\partial\Omega^{i}$. Next, let $\Lambda_{\Gamma
}^{i}$ be defined as the space of Lagrange multipliers corresponding
to~$\Gamma^{i}$, $i=1,\ldots,N_{S}$, and define a space
\begin{equation}
\Lambda_{\Gamma}=\Lambda_{\Gamma}^{1}\times\cdots\times\Lambda_{\Gamma}%
^{N_{S}}.
\end{equation}
The substructure problems are obtained by assembling contributions of finite
elements in each$~\Omega^{i}$,%
\begin{equation}
\left[
\begin{array}
[c]{cccc}%
A^{i} & B^{iT} & B_{\mathcal{F},I}^{iT} & B_{\mathcal{F},\Gamma}^{iT}\\
B^{i} & -C^{i} & -C_{\mathcal{F},I}^{iT} & -C_{\mathcal{F},\Gamma
}^{iT}\\
B_{\mathcal{F},I}^{i} & -C_{\mathcal{F},I}^{i} & -\widetilde{C}_{II}^{i} &
-\widetilde{C}_{{\Gamma}I}^{iT}\\
B_{\mathcal{F},\Gamma}^{i} & -C_{\mathcal{F},\Gamma}^{i} & -\widetilde{C}%
_{{\Gamma}I}^{i} & -\widetilde{C}_{{\Gamma}{\Gamma}}^{i}%
\end{array}
\right]  \left[
\begin{array}
[c]{c}%
\mathbf{u}^{i}\\
p^{i}\\
\lambda_{I}^{i}\\
\lambda_{\Gamma}^{i}%
\end{array}
\right]  =\left[
\begin{array}
[c]{c}%
g^{i}\\
f^{i}\\
0\\
0
\end{array}
\right]  ,\qquad i=1,\dots,N_{S}, \label{eq:hybrid-fractured-substructure}%
\end{equation}
where the blocks $A^{i}$ are block diagonal with blocks corresponding to
finite element matrices, and the blocks $\ol{C}^{i}\neq0$ only if the
substructure $\Omega^{i}$ contains coupling of elements of different
dimensions. We note that the global problem (\ref{eq:hybrid-m-fractured}) could
be obtained from (\ref{eq:hybrid-fractured-substructure}) by further assembly at the interface.

In \emph{the iterative substructuring} (see e.g.~\cite{Toselli-2005-DDM}), we first
reduce the problem to substructure interfaces. In our context we can eliminate
normal fluxes, pressure unknowns, and Lagrange multipliers at interiors of
substructures, and we can define the substructure Schur complements$~S^{i}%
:\Lambda_{\Gamma}^{i}\mapsto\Lambda_{\Gamma}^{i}$, $i=1,\dots,N_{S}$, {formally} as
{
\begin{equation}
S^{i} = 
\widetilde{C}_{{\Gamma}{\Gamma}}^{i} 
+ 
\left[
\begin{array}
[c]{ccc}%
B_{\mathcal{F},\Gamma}^{i} & -C_{\mathcal{F},\Gamma}^{i} & -\widetilde{C}_{{\Gamma}I}^{i}%
\end{array}
\right]  
\left[
\begin{array}
[c]{ccc}%
A^{i} & B^{iT} & B_{\mathcal{F},I}^{iT}\\
B^{i} & -C^{i} & -C_{\mathcal{F},I}^{iT}\\
B_{\mathcal{F},I}^{i} & -C_{\mathcal{F},I}^{i} & -\widetilde{C}_{II}^{i}%
\end{array}
\right]^{-1}
\left[
\begin{array}
[c]{c}%
B_{\mathcal{F},\Gamma}^{iT}\\
-C_{\mathcal{F},\Gamma}^{iT}\\
-\widetilde{C}_{{\Gamma}I}^{iT}%
\end{array}
\right] 
. 
\label{eq:substructure-schur}%
\end{equation}
}
{However, in an implementation of a Krylov subspace iterative method, 
we only need to compute the matrix-vector
product $S^{i}\lambda_{\Gamma}^{i}$ for a given vector$~\lambda_{\Gamma}^{i}$.
Therefore, the matrix is not constructed explicitly, and  
the multiplication is obtained as follows.
}

\begin{algorithm}
\label{alg:S-mult} Given $\lambda_{\Gamma}^{i}\in\Lambda_{\Gamma}^{i}$,
determine $S^{i}\lambda_{\Gamma}^{i}\in\Lambda_{\Gamma}^{i}$ in the following
two steps:

\begin{enumerate}
\item solve a local \emph{Dirichlet problem}
\begin{equation}
\left[
\begin{array}
[c]{ccc}%
A^{i} & B^{iT} & B_{\mathcal{F},I}^{iT}\\
B^{i} & -C^{i} & -C_{\mathcal{F},I}^{iT}\\
B_{\mathcal{F},I}^{i} & -C_{\mathcal{F},I}^{i} & -\widetilde{C}_{II}^{i}%
\end{array}
\right]  \left[
\begin{array}
[c]{c}%
\mathbf{w}^{i}\\
q^{i}\\
\mu_{I}^{i}%
\end{array}
\right]  =-\left[
\begin{array}
[c]{c}%
B_{\mathcal{F},\Gamma}^{iT}\\
-C_{\mathcal{F},\Gamma}^{iT}\\
-\widetilde{C}_{{\Gamma}I}^{iT}%
\end{array}
\right]  \lambda_{\Gamma}^{i},
\label{eq:discrete-dirichlet-problem-substructure}%
\end{equation}

\item perform two sparse matrix-vector multiplications
\begin{equation}
S^{i}\lambda_{\Gamma}^{i}\longleftarrow - \left( - \widetilde{C}_{{\Gamma}{\Gamma}}%
^{i}\lambda_{\Gamma}^{i} + \left[
\begin{array}
[c]{ccc}%
B_{\mathcal{F},\Gamma}^{i} & -C_{\mathcal{F},\Gamma}^{i} & -\widetilde{C}%
_{{\Gamma}I}^{i}%
\end{array}
\right]  \left[
\begin{array}
[c]{c}%
\mathbf{w}^{i}\\
q^{i}\\
\mu_{I}^{i}%
\end{array}
\right] 
\right). 
\label{eq:substructure-schur-multiple}%
\end{equation}

\end{enumerate}
\end{algorithm}

Next, let $\widehat{\Lambda}_{\Gamma}$ be the space of global degrees of
freedom, such that the values of degrees of freedom from $\Lambda_{\Gamma}$
coincide on$~\Gamma$. The vectors of the local substructure degrees of freedom
$\lambda_{\Gamma}^{i}\in\Lambda_{\Gamma}^{i}$ and the vector of the global
degrees of freedom $\lambda_{\Gamma}\in\widehat{\Lambda}_{\Gamma}$ are related
by
\begin{equation}
\lambda_{\Gamma}^{i}=R^{i}\lambda_{\Gamma},\quad i=1,\dots,N_{S},\label{eq:R}%
\end{equation}
where $R^{i}$ are the restriction operators. More specifically, each $R^{i}$ is a $0$-$1$
matrix such that every row contains one entry {equal to one}, and $R^{i}R^{iT}=I$.
The global Schur complement $\widehat{S}:\widehat{\Lambda}_{\Gamma}%
\rightarrow\widehat{\Lambda}_{\Gamma}$ can be obtained as
\begin{equation}
\widehat{S}=\sum_{i=1}^{N_{S}}R^{iT}S^{i}R^{i}.\label{eq:schur-assembly}%
\end{equation}
Equation (\ref{eq:schur-assembly}) represents the formal assembly of
substructure Schur complements into the global Schur complement. The global
Schur complement system, which we would like to solve iteratively, reads
\begin{equation}
\widehat{S}\lambda_{\Gamma}=\widehat{b},\label{eq:S_hat-system}%
\end{equation}
where vector $\widehat{b}=\sum_{i=1}^{N_{S}}R^{iT}b^{i}$ is obtained from
substructure reduced right-hand sides
\begin{equation}
b^{i}=\left[
\begin{array}
[c]{ccc}%
B_{\mathcal{F},\Gamma}^{i} & -C_{\mathcal{F},\Gamma}^{i} & -\widetilde{C}%
_{{\Gamma}I}^{i}%
\end{array}
\right]  \left[
\begin{array}
[c]{ccc}%
A^{i} & B^{iT} & B_{\mathcal{F},I}^{iT}\\
B^{i} & -C^{i} & -C_{\mathcal{F},I}^{iT}\\
B_{\mathcal{F},I}^{i} & -C_{\mathcal{F},I}^{i} & -\widetilde{C}_{II}^{i}%
\end{array}
\right]  ^{-1}\left[
\begin{array}
[c]{c}%
g^{i}\\
f^{i}\\
0
\end{array}
\right]  .\label{eq:reduced-rhs}%
\end{equation}
In our implementation, we factor and store the matrices
from~(\ref{eq:discrete-dirichlet-problem-substructure}). The factors are then
used to compute the vectors$~b^{i}$ in~(\ref{eq:reduced-rhs}), and especially
in Algorithm~\ref{alg:S-mult}, which is used in connection to formula~(\ref{eq:schur-assembly}) 
to evaluate $\lambda_{\Gamma}\rightarrow
\widehat{S}\lambda_{\Gamma}$ within each iteration of a Krylov subspace iterative
method. This algorithm allows a straightforward parallel implementation. After an approximate
solution of~(\ref{eq:S_hat-system}) is found, solution in interiors of
substructures, including all primal variables, is recovered
from (\ref{eq:hybrid-fractured-substructure})
using the factors
from~(\ref{eq:discrete-dirichlet-problem-substructure}).

\begin{remark}
There are other ways to derive the interface problem~(\ref{eq:S_hat-system}).
The authors of~\cite{Tu-2007-BAF,Cowsar-1995-BDD} use a mixed-hybrid
formulation with Lagrange multipliers introduced only at the interface
$\Gamma$ as their starting point. 
{While problem~(\ref{eq:S_hat-system}) is equivalent to the interface problems 
considered in~\cite{Tu-2007-BAF,Cowsar-1995-BDD}, 
the substructure problems therein have a different structure
from~(\ref{eq:hybrid-fractured-substructure}).} 
In particular, there are no
blocks corresponding to~$\lambda_{I}^{i}$, and the matrices corresponding
to~$A^{i}$ are no longer block diagonal element-wise. Next, the authors
of~\cite{Maryska-2000-SCS,Brezina-2010-MHF} construct explicit Schur
complement with respect to the whole block of Lagrange multipliers $\lambda$
and they show that, due to the special structure of~$A$, the complement is
both sparse and reasonably cheap to construct. If this was performed
substructure by substructure, this could be again seen as an intermediate step
in obtaining problem~(\ref{eq:S_hat-system}) by additional elimination of the
interior Lagrange multipliers~$\lambda_{I}$. However, this would again lead to
different substructure problems based on explicit local Schur complements.
\end{remark}

The following result allows an application of the BDDC\ method to problems
with fractures.

\begin{theorem}
\label{th:sspd} Let natural boundary conditions (\ref{eq:problem-3}) be
prescribed at a certain part of the boundary, i.e. $\partial\Omega_{N,d}
\neq\emptyset$ for at least one $d\in\{1,2,3\}$. Then the matrix~$\widehat{S}$
in (\ref{eq:S_hat-system}) is symmetric positive definite.
\end{theorem}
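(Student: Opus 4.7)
The plan is to reduce the claim to the positive definiteness of the matrix $M = \ol{B} A^{-1} \ol{B}^T + \ol{C}$, which the proof of Theorem~\ref{th:regular} already establishes on the whole space $\ol{Q}$. Symmetry of $\widehat{S}$ is immediate: every local matrix in~(\ref{eq:hybrid-fractured-substructure}) is symmetric, hence its Schur complement with respect to $\lambda_\Gamma^i$ is symmetric, and formula~(\ref{eq:substructure-schur}) agrees with that Schur complement up to an overall sign; the sub-assembly~(\ref{eq:schur-assembly}) preserves symmetry.

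For positive definiteness I would use the standard substructuring identity to relate $\widehat{S}$ to a Schur complement of the global assembled matrix~(\ref{eq:hybrid-m-fractured}), viewed after splitting $\lambda=(\lambda_I,\lambda_\Gamma)$. Because the interior unknowns $(\vc u^i, p^i, \lambda_I^i)$ couple only inside a single substructure -- this is precisely how $\Gamma$ was defined in Section~\ref{sec:substructuring} -- the block that one would eliminate when forming the global Schur complement is block-diagonal over $i$, so assembling local Schur complements equals the Schur complement of the assembled matrix. Keeping track of the sign convention in~(\ref{eq:substructure-schur}), this identifies $-\widehat{S}$ with the Schur complement of~(\ref{eq:hybrid-m-fractured}) with respect to $\lambda_\Gamma$.

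To conclude, I would repeat the two congruence reductions from the proof of Theorem~\ref{th:regular}. Eliminating $\vc u$ first, using that $A$ is SPD, reduces the system to $-M$ acting on $\ol{Q}$, where $M$ is SPD on all of $\ol{Q}$. A further Schur reduction with respect to $\lambda_\Gamma$ (eliminating $(p,\lambda_I)$) produces the Schur complement of $-M$, which is the negative of the Schur complement of the SPD matrix $M$ and is therefore negative definite. Combined with the sign identity from the previous paragraph, this yields $\widehat{S}$ equal to the Schur complement of $M$ with respect to $\lambda_\Gamma$, which is SPD. The main subtlety I anticipate is justifying the sub-assembly identity in this mixed-hybrid setting, where $\lambda_I^i$ participates as an interior unknown on the same footing as $\vc u^i$ and $p^i$; this is possible precisely because $\Gamma$ has been defined as the set of Lagrange multipliers shared by more than one substructure, so that no row of $B_{\mathcal{F},I}^i$, $C_{\mathcal{F},I}^i$, or $\widetilde{C}_{II}^i$ couples distinct substructures. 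A secondary book-keeping concern is keeping signs straight through two successive Schur reductions, given the opposite-sign convention in~(\ref{eq:substructure-schur}).
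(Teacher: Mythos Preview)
Your proposal is correct and follows essentially the same route as the paper's proof: both introduce $\mathcal{S}=\ol{B}A^{-1}\ol{B}^T+\ol{C}$ (your $M$), use that it is SPD on $\ol{Q}$ from Theorem~\ref{th:regular}, and then identify $\widehat{S}$ with the Schur complement of $\mathcal{S}$ with respect to the interface block $\lambda_\Gamma$, which is SPD by Sylvester's law. The paper simply asserts the identity $\widehat{S}=\mathcal{S}_{\Gamma\Gamma}-\mathcal{S}_{\Gamma I}\mathcal{S}_{II}^{-1}\mathcal{S}_{\Gamma I}^{T}$ without comment, whereas you spell out the sub-assembly argument and the sign bookkeeping; these details are correct and worth including.
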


\begin{proof}
Using the notation of~(\ref{eq:blocks_matrix})--(\ref{eq:congruence1}),
let us introduce a matrix $\mathcal{S} = \ol{B}A^{-1}\ol{B}^T + \ol{C}$.
The matrix~$\mathcal{S}$ is symmetric positive definite by Theorem~\ref{th:regular} (and its proof).
Applying another congruence transformation to $\mathcal{S}$
and denoting the rows corresponding to the interface Lagrange multipliers  by subscript $_{\Gamma}$
and the interior by~$_{I}$,
we obtain
\begin{equation}
\label{eq:proof}
\mathcal{S}
=
\left[
\begin{matrix}
\mathcal{S}_{II}       & \mathcal{S}_{\Gamma I}^T  \\
\mathcal{S}_{\Gamma I} & \mathcal{S}_{\Gamma \Gamma}
\end{matrix}
\right]
=
\left[
\begin{matrix}
I                                          &   \\
\mathcal{S}_{\Gamma I} \mathcal{S}_{II}^{-1} & I
\end{matrix}
\right]
\left[
\begin{matrix}
\mathcal{S}_{II} &   \\
& \mathcal{S}_{\Gamma \Gamma} - \mathcal{S}_{\Gamma I} \mathcal{S}_{II}^{-1} \mathcal{S}_{\Gamma I}^{T}
\end{matrix}
\right]
\left[
\begin{matrix}
I    & \mathcal{S}_{II}^{-1} \mathcal{S}_{\Gamma I}^{T}  \\
& I
\end{matrix}
\right].
\end{equation}
Since the matrix on the left-hand side is SPD,
both diagonal blocks $\mathcal{S}_{II}$
and $\mathcal{S}_{\Gamma \Gamma} - \mathcal{S}_{\Gamma I} \mathcal{S}_{II}^{-1} \mathcal{S}_{\Gamma I}^{T}$
are also symmetric positive definite from the Sylvester law of inertia.
It remains to note that the Schur complement $\widehat{S}$ in (\ref{eq:S_hat-system}) is symmetric positive definite
because
$\widehat{S} = \mathcal{S}_{\Gamma \Gamma} - \mathcal{S}_{\Gamma I} \mathcal{S}_{II}^{-1} \mathcal{S}_{\Gamma I}^{T}$.
\end{proof}

Theorem~\ref{th:sspd} allows us to use the conjugate gradient method for the iterative solution 
of~(\ref{eq:S_hat-system}). 
In the next section, we describe the BDDC method used as {a preconditioner for $\widehat{S}$}.

\section{The BDDC preconditioner}

\label{sec:bddc}

In this section, we formulate the BDDC method for the solution
of~(\ref{eq:S_hat-system}). The algorithm can be viewed as a generalisation
of~\cite{Tu-2007-BAF}. However, we follow the original description from~\cite{Dohrmann-2003-PSC}, 
which better reflects our implementation. 

One step of BDDC provides a two-level preconditioner for the conjugate gradient method applied to solving problem (\ref{eq:S_hat-system}).
It is characterised by the selection of certain \emph{coarse degrees of freedom} based on primary degrees of freedom at interface $\Gamma$. 
The main coarse degrees of freedom in this paper are arithmetic averages over \emph{faces}, 
defined as subsets of degrees of freedom shared by the same two substructures. 
In addition, \emph{corner} coarse degrees of freedom,
defined as any selected Lagrange multiplier at the interface, are used.
Substructure \emph{edges}, defined as subsets of degrees of freedom shared by several substructures, may also appear (see Remark~\ref{rem:constraints}). 

The BDDC method introduces \emph{constraints} which enforce continuity of functions from ${\Lambda}_{\Gamma}$ 
at coarse degrees of freedom among substructures.
This gives raise to the space $\widetilde{\Lambda}_{\Gamma}$, 
which is given as the subspace of ${\Lambda}_{\Gamma}$ of functions 
which satisfy these continuity constraints. 
In particular,
\begin{equation}
\label{eq:subspaces}
\widehat{\Lambda}_{\Gamma} \subset \widetilde{\Lambda}_{\Gamma} \subset {\Lambda}_{\Gamma}.
\end{equation}

\begin{remark}
\label{rem:constraints}
{In three spatial dimensions, 
several triangular elements can be connected at a single Lagrange multiplier 
in a star-like configuration (cf. Fig.~\ref{fig:schemes}).
A similar statement holds for line elements considered in two- or three-dimensional space.
This fact may lead to the presence of substructure \emph{edges} and even \emph{vertices} 
(defined as degenerate edges consisting of a single degree of freedom),
and we may prescribe also edge averages as constraints. 
As mentioned above, we
also select \emph{corners} as coarse degrees of freedom. While essentially any degree
of freedom at the interface~$\Gamma$ can be \emph{a corner}, we select them by the
face-based algorithm~\cite{Sistek-2012-FSC}. 
This algorithm considers all vertices as corners, and, in addition, 
it selects three geometrically well distributed degrees of freedom 
from the interface between two substructures sharing a face into the set of \emph{corners}.
Although considering corners is not the standard
practice with $RT_{0}$ finite elements, in our experience, corners improve convergence
for numerically difficult problems, as can be observed for the engineering applications
presented in Section~\ref{sec:numerical}.}
\end{remark}

We now proceed to the formulation of
operators used in the BDDC method. The choice of constraints determines the
construction of matrices~$D^{i}$. Each row of$~D^{i}$ defines one coarse
degree of freedom at substructure~$\Omega^{i}$, e.g., a~corner
corresponds to a single $1$ entry at a row and an arithmetic average to
several $1$'s at a row. The \emph{coarse basis functions}~$\Phi_{\Gamma}^{i}$,
one per each substructure coarse degree of freedom, are computed by augmenting
the matrices from~(\ref{eq:hybrid-fractured-substructure}) with~$D^{i}$, and
solving the augmented systems with multiple right-hand sides 
\begin{equation}
\left[
\begin{array}
[c]{ccccc}%
A^{i} & B^{iT} & B_{\mathcal{F},I}^{iT} & B_{\mathcal{F},\Gamma}^{iT} & 0\\
B^{i} & -C^{i} & -C_{\mathcal{F},I}^{iT} & -C_{\mathcal{F},\Gamma
}^{iT} & 0\\
B_{\mathcal{F},I}^{i} & -C_{\mathcal{F},I}^{i} & -\widetilde{C}_{II}^{i} &
-\widetilde{C}_{{\Gamma}I}^{iT} & 0\\
B_{\mathcal{F},\Gamma}^{i} & -C_{\mathcal{F},\Gamma}^{i} & -\widetilde{C}%
_{{\Gamma}I}^{i} & -\widetilde{C}_{{\Gamma}{\Gamma}}^{i} & D^{iT}\\
0 & 0 & 0 & D^{i} & 0
\end{array}
\right]  \left[
\begin{array}
[c]{c}%
X^{i}\\
Z^{i}\\
\Phi_{I}^{i}\\
\Phi_{\Gamma}^{i}\\
L^{i}%
\end{array}
\right]  =\left[
\begin{array}
[c]{c}%
0\\
0\\
0\\
0\\
I
\end{array}
\right]  ,\quad i=1,\dots,N_{S},
\label{eq:hybrid-fractured-augmented-substructure}%
\end{equation}
where $I$ is the identity matrix, and $X^{i}$, $Z^{i}$, and $\Phi_{I}^{i}$ are
auxiliary matrices not used any further. 
As shown in \cite{Pultarova-2012-PCP},
the \emph{local coarse matrix}~$S_{CC}^{i}$ is obtained 
as a side product of solving~(\ref{eq:hybrid-fractured-augmented-substructure}) 
as
\begin{equation}
S_{CC}^{i}=\left[
X^{iT}\ Z^{iT}\ \Phi_{I}^{iT}\ \Phi_{\Gamma}^{iT}%
\right]  \left[
\begin{array}
[c]{cccc}%
A^{i} & B^{iT} & B_{\mathcal{F},I}^{iT} & B_{\mathcal{F},\Gamma}^{iT}\\
B^{i} & -C^{i} & -C_{\mathcal{F},I}^{iT} & -C_{\mathcal{F},\Gamma
}^{iT}\\
B_{\mathcal{F},I}^{i} & -C_{\mathcal{F},I}^{i} & -\widetilde{C}_{II}^{i} &
-\widetilde{C}_{{\Gamma}I}^{iT}\\
B_{\mathcal{F},\Gamma}^{i} & -C_{\mathcal{F},\Gamma}^{i} & -\widetilde{C}%
_{{\Gamma}I}^{i} & -\widetilde{C}_{{\Gamma}{\Gamma}}^{i}
\end{array}
\right]  \left[
\begin{array}
[c]{c}%
X^{i}\\
Z^{i}\\
\Phi_{I}^{i}\\
\Phi_{\Gamma}^{i}%
\end{array}
\right]  =-L^{i}. \label{eq:scci_def}%
\end{equation}
Let us define, similarly to~(\ref{eq:R}), operators$~R_{C}^{i}$\ that relate
vectors of local coarse degrees of freedom$~\mu_{C}^{i}$ to
the vector of global coarse degrees of freedom$~\mu_{C}$ as
\begin{equation}
\mu_{C}^{i}=R_{C}^{i}\mu_{C}.
\end{equation}
The \emph{global coarse matrix} $S_{CC}$ is then obtained by assembling the
local contributions as
\begin{equation}
S_{CC}=\sum_{i=1}^{N_{S}}R_{C}^{iT}S_{CC}^{i}R_{C}^{i}. \label{eq:Scc_def}%
\end{equation}
Finally, let us define the scaling operators
\begin{equation}
W^{i}:{\Lambda}_{\Gamma}^{i}\rightarrow{\Lambda}_{\Gamma}^{i},\quad
i=1,\dots,N^{S}, \label{eq:Wi}%
\end{equation}
which are given as diagonal matrices of weights 
that satisfy
\begin{equation}
\sum_{i=1}^{N_{S}}R^{iT}W^{i}R^{i}=I.
\end{equation}
More details on the selection of diagonal entries in $W^{i}$ are given in Section~\ref{sec:scaling}.

With this selection of spaces and operators, we are ready to formulate the BDDC preconditioner.
\begin{algorithm}
\label{alg:bddc}The BDDC preconditioner $M_{BDDC}:r_{\Gamma}\in
\widehat{\Lambda}_{\Gamma}\rightarrow\lambda_{\Gamma}\in\widehat{\Lambda
}_{\Gamma}$ is defined in the following steps:

\begin{enumerate}
\item Compute the local residuals
\begin{equation}
r_{\Gamma}^{i}=W^{i}R^{i}r_{\Gamma},\quad i=1,\dots,N_{S}.
\end{equation}

\item Compute the \emph{substructure corrections} $\eta_{\Gamma \Delta}^{i}$ by
solving the local\emph{ Neumann problems}
\begin{equation}
\left[
\begin{array}
[c]{ccccc}%
A^{i} & B^{iT} & B_{\mathcal{F},I}^{iT} & B_{\mathcal{F},\Gamma}^{iT} & 0\\
B^{i} & -C^{i} & -C_{\mathcal{F},I}^{iT} & -C_{\mathcal{F},\Gamma
}^{iT} & 0\\
B_{\mathcal{F},I}^{i} & -C_{\mathcal{F},I}^{i} & -\widetilde{C}_{II}^{i} &
-\widetilde{C}_{{\Gamma}I}^{iT} & 0\\
B_{\mathcal{F},\Gamma}^{i} & -C_{\mathcal{F},\Gamma}^{i} & -\widetilde{C}%
_{{\Gamma}I}^{i} & -\widetilde{C}_{{\Gamma}{\Gamma}}^{i} & D^{iT}\\
0 & 0 & 0 & D^{i} & 0
\end{array}
\right]  \left[
\begin{array}
[c]{c}%
x^{i}\\
z^{i}\\
\eta_{I \Delta }^{i}\\
\eta_{\Gamma \Delta}^{i}\\
l^{i}%
\end{array}
\right]  =\left[
\begin{array}
[c]{c}%
0\\
0\\
0\\
r_{\Gamma}^{i}\\
0
\end{array}
\right]  ,\quad i=1,\dots,N_{S}.\label{eq:BDDC-substructure-problem}%
\end{equation}

\item Compute the \emph{coarse correction} $\eta_{C}^{i}$ by collecting the
coarse residual
\begin{equation}
r_{C}=\sum_{i=1}^{N_{S}}R_{C}^{iT}\Phi_{\Gamma}^{iT}r_{\Gamma}^{i},
\end{equation}
\ solving the global \emph{coarse problem}
\begin{equation}
S_{CC}\,\eta_{C}=r_{C},\label{eq:BDDC-coarse-problem}%
\end{equation}
and distributing the coarse correction%
\begin{equation}
\eta_{\Gamma C}^{i}=\Phi_{\Gamma}^{i}R_{C}^{i}\eta_{C},\quad i=1,\dots,N_{S}.
\end{equation}

\item Combine and average the corrections%
\begin{equation}
\lambda_{\Gamma}=-\sum_{i=1}^{N_{S}}R^{iT}W^{i}\left(  \eta_{\Gamma \Delta}^{i}%
+\eta_{\Gamma C}^{i}\right)  .\label{eq:BDDC-preconditioner}%
\end{equation}

\end{enumerate}
\end{algorithm}

We note that the factorisations of the matrices from
(\ref{eq:hybrid-fractured-augmented-substructure}) are also used for each
solution of (\ref{eq:BDDC-substructure-problem}).

In order to apply the existing BDDC theory for elliptic problems (e.g. \cite{Mandel-2007-BFM,Mandel-2005-ATP})
to the proposed preconditioner, 
we introduce some additional notation and make a few observations.
{Due to~(\ref{eq:substructure-schur}), the substructure corrections in~(\ref{eq:BDDC-substructure-problem})
can be written equivalently as}
\begin{equation}
\left[
\begin{array}
[c]{cc}%
-S^{i} & D^{iT}\\
D^{i} & 0
\end{array}
\right]  \left[
\begin{array}
[c]{c}%
\eta_{\Gamma \Delta}^{i}\\
l^{i}%
\end{array}
\right]  =\left[
\begin{array}
[c]{c}%
r_{\Gamma}^{i}\\
0
\end{array}
\right],
\label{eq:BDDC-substructure-problem-with-Schur}%
\end{equation}
and the construction of coarse basis functions~$\Phi_{\Gamma}^{i}$ in
(\ref{eq:hybrid-fractured-augmented-substructure}) as
\begin{equation}
\left[
\begin{array}
[c]{cc}%
-S^{i} & D^{iT}\\
D^{i} & 0
\end{array}
\right]  \left[
\begin{array}
[c]{c}
\Phi_{\Gamma}^{i}\\
L^{i}%
\end{array}
\right]  =\left[
\begin{array}
[c]{c}%
0\\
I
\end{array}
\right]. 
\label{eq:bddc-substructure-problem-with-schur-for-coarse-basis}%
\end{equation}

Next, let us formally write the operators and vectors in the block form
\begin{equation}
\label{eq:block_operators}
\lambda_{\Gamma}=\left[
\begin{array}
[c]{c}%
\lambda_{\Gamma}^{1}\\
\vdots\\
\lambda_{\Gamma}^{N_{S}}%
\end{array}
\right]  ,\ R=\left[
\begin{array}
[c]{c}%
R^{1}\\
\vdots\\
R^{N_{S}}%
\end{array}
\right]  ,\ W=\left[
\begin{array}
[c]{ccc}%
W^{1} &  & \\
& \ddots & \\
&  & W^{N_{S}}%
\end{array}
\right]  ,\ S=\left[
\begin{array}
[c]{ccc}%
S^{1} &  & \\
& \ddots & \\
&  & S^{N_{S}}%
\end{array}
\right]  .
\end{equation}

By grouping the steps of Algorithm~\ref{alg:bddc} and using (\ref{eq:BDDC-substructure-problem-with-Schur}), 
the operator of the BDDC preconditioner can be formally written as
\begin{equation}
\label{eq:Mbddc-formal}
M_{BDDC}           = R^{T} W \widetilde{S}^{-1} WR,
\end{equation}
where
\begin{equation}
\label{eq:Stilde}
\widetilde{S}^{-1} = 
-\left\{ 
\underset{\scriptsize i = 1,\dots,N_S}{\mbox{diag}}
\left(
\left[
\begin{array}
[c]{cc}%
I_{{\Lambda}_{\Gamma}^{i}} & 0 
\end{array}
\right]  
\left[
\begin{array}
[c]{cc}%
-S^{i} & D^{iT}\\
D^{i} & 0
\end{array}
\right]^{-1} 
\left[
\begin{array}
[c]{c}%
I_{{\Lambda}_{\Gamma}^{i}} \\ 
0 
\end{array}
\right]  
\right) 
 +
\left(
\sum_{i=1}^{N_{S}}R_{C}^{iT}\Phi_{\Gamma}^{iT} 
\right)^T
S_{CC}^{-1}
\left(
\sum_{i=1}^{N_{S}}R_{C}^{iT}\Phi_{\Gamma}^{iT}
\right)
\right\}.
\end{equation}
The first term in $\widetilde{S}^{-1}$ corresponds to substructure corrections and the second term to the coarse correction
(steps 2 and 3 of Algorithm~\ref{alg:bddc}), 
and $I_{{\Lambda}_{\Gamma}^{i}}$ is the identity matrix in ${\Lambda}_{\Gamma}^{i}$.
From (\ref{eq:Mbddc-formal}) and (\ref{eq:Stilde}), one can readily see that $M_{BDDC}$ is symmetric.

\begin{assumption}
\label{as:nullS}Let us assume that
\begin{equation}
\mathrm{null}\,S^{i} \perp \mathrm{null}\,D^{i}.
\end{equation}
\end{assumption}

In order to satisfy Assumption~\ref{as:nullS}, we must prescribe enough coarse
degrees of freedom as constraints along with Robin boundary conditions
(\ref{eq:newton-1})--(\ref{eq:newton-2}) and (\ref{eq:newton-3}) at each fracture within
substructure$~\Omega^{i}$. 
Since constrains in $D^{i}$ are linearly independent, $D^{iT}$ has full column rank. 
In particular,
Assumption~\ref{as:nullS} is satisfied when arithmetic averages are used on
each substructure face (and eventually edge) as constraints.

\begin{lemma}
\label{lemma:positive_definite_preconditioner} The operator $\widetilde{S}^{-1}$ in 
preconditioner $M_{BDDC}$ is symmetric and positive definite on the space $\widetilde{\Lambda}_{\Gamma}$.
\end{lemma}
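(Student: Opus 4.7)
The plan is to identify $\widetilde{S}^{-1}$ as the exact inverse, on $\widetilde{\Lambda}_{\Gamma}$, of a partially subassembled operator $\widetilde{S}$ characterised by the bilinear form $\langle \widetilde{S}w, v\rangle = \sum_{i}(R^{i}v)^{T}S^{i}(R^{i}w)$. Once this identification is made, both symmetry and positive definiteness of $\widetilde{S}^{-1}$ on $\widetilde{\Lambda}_{\Gamma}$ will follow from the corresponding properties of $\widetilde{S}$.

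Symmetry of $\widetilde{S}^{-1}$ is visible directly in~(\ref{eq:Stilde}). The substructure term is block-diagonal with blocks of the form $[\,I\ 0\,]\,K^{i,-1}\,[\,I\ 0\,]^{T}$ where $K^{i} = \left[\begin{smallmatrix} -S^{i} & D^{iT}\\ D^{i} & 0\end{smallmatrix}\right]$ is symmetric and, by Assumption~\ref{as:nullS} together with the full column rank of $D^{iT}$, nonsingular. The coarse term has the outer-product form $P^{T}S_{CC}^{-1}P$ with $P = \sum_{i}R_{C}^{iT}\Phi_{\Gamma}^{iT}$; and $S_{CC}$ is symmetric because the local contribution satisfies $S_{CC}^{i} = -L^{i} = -\Phi_{\Gamma}^{iT} S^{i}\Phi_{\Gamma}^{i}$, which is obtained by left-multiplying the first block-row of~(\ref{eq:bddc-substructure-problem-with-schur-for-coarse-basis}) by $\Phi_{\Gamma}^{iT}$ and invoking $D^{i}\Phi_{\Gamma}^{i} = I$.

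For the identification step, I would recast the partially subassembled problem as a single global saddle-point system with block-diagonal matrix $S = \mathrm{diag}(S^{i})$ and a constraint matrix enforcing continuity at all coarse degrees of freedom across substructures. Block-eliminating the constraint multipliers should decouple the inverse into an uncoupled local part plus a coupled coarse part, reproducing precisely the two-term expression in~(\ref{eq:Stilde}); along the way, $D^{i}\Phi_{\Gamma}^{i} = I$ guarantees that $\widetilde{S}^{-1}$ maps $\widetilde{\Lambda}_{\Gamma}$ into itself, so the restriction makes sense.

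Finally, positive definiteness of $\widetilde{S}$ on $\widetilde{\Lambda}_{\Gamma}$: nonnegativity is immediate since each $S^{i}$ is symmetric positive semi-definite, which follows by a substructure-level rerun of the inertia argument in Theorem~\ref{th:sspd}. If $\widetilde{S}(w,w) = 0$, then each $R^{i}w \in \mathrm{null}\,S^{i}$; Assumption~\ref{as:nullS} implies $\mathrm{null}\,S^{i} \cap \mathrm{null}\,D^{i} = \{0\}$, so $R^{i}w$ is uniquely determined by $D^{i}R^{i}w$, and these values are consistent across substructures since $w \in \widetilde{\Lambda}_{\Gamma}$. The components-plus-natural-boundary propagation used in the proof of Theorem~\ref{th:regular}, replayed at the substructure-interface level, then forces $w = 0$. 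I expect the main obstacle to be the identification step: one must carefully track the interplay between the local $D^{i}$-constraints appearing in Algorithm~\ref{alg:bddc} and the global coarse-continuity constraint, especially in the presence of the penalty block $\overline{C}$ and the combined-mesh structure, in order to verify that block elimination reproduces~(\ref{eq:Stilde}) exactly. The substructure-level replay of Theorem~\ref{th:regular} needed for strict positivity should be essentially routine by comparison.
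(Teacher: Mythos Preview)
Your proposal is correct, but it takes a genuinely different route from the paper's own proof.

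The paper works directly with the two-term expression~(\ref{eq:Stilde}). It decomposes each local space as ${\Lambda}_{\Gamma}^{i} = \mathrm{null}\,D^{i} \oplus \mathrm{range}\,\Phi_{\Gamma}^{i}$, shows these two pieces are $S^{i}$-orthogonal via~(\ref{eq:bddc-substructure-problem-with-schur-for-coarse-basis}), and then analyses the sign of each term separately: the substructure block is rewritten as $-Q^{i}(Q^{iT}S^{i}Q^{i})^{-1}Q^{iT}$ with $Q^{i}$ an orthonormal basis of $\mathrm{null}\,D^{i}$, while the coarse term is handled through $S_{CC}^{i}=-\Phi_{\Gamma}^{iT}S^{i}\Phi_{\Gamma}^{i}$ and the assembled $S_{CC}$ is argued to be negative definite once some substructure carries a natural boundary condition. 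The minus sign in front of the braces then gives positivity. No identification of~(\ref{eq:Stilde}) with the inverse of a partially subassembled operator is made.

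Your approach---recognising $\widetilde{S}^{-1}$ as the bona fide inverse, on $\widetilde{\Lambda}_{\Gamma}$, of the block-diagonal operator $S=\mathrm{diag}(S^{i})$ restricted to $\widetilde{\Lambda}_{\Gamma}$, and then proving that restriction is SPD---is the abstract viewpoint of~\cite{Mandel-2005-ATP,Mandel-2007-BFM}. It buys you a cleaner conceptual statement and makes the connection to Theorem~\ref{th:bddc_bound} transparent, at the cost of the block-elimination bookkeeping you flag as the main obstacle. The paper's term-by-term argument avoids that bookkeeping entirely but is less self-contained about why the two corrections together exhaust the space. One small slip: in your bilinear form you write $R^{i}w$, but elements of $\widetilde{\Lambda}_{\Gamma}\subset\Lambda_{\Gamma}$ are already tuples $(w^{1},\dots,w^{N_{S}})$, so you should take the $i$-th component rather than restrict via $R^{i}$; this is purely notational. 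Your positivity argument (null-space propagation from a substructure with natural boundary, using that face averages pin down the piecewise constants in $\mathrm{null}\,S^{i}$) is the right idea and parallels what the paper leaves implicit when it asserts that $S_{CC}$ is negative definite.
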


\begin{proof} 

The space $\widetilde{\Lambda}_{\Gamma}$ is decomposed into the substructure spaces and the coarse space,
\begin{equation}
\widetilde{\Lambda}_{\Gamma} = \widetilde{\Lambda}_{\Gamma\Delta} \oplus \widetilde{\Lambda}_{\Gamma C}.
\end{equation}
To achieve this splitting, each local space ${\Lambda}_{\Gamma}^{i}$ is decomposed into subspaces
\begin{equation}
{\Lambda}_{\Gamma}^{i} = \mathrm{null}\,D^{i} \oplus \mathrm{range}\,\Phi_{\Gamma}^{i},
\label{eq:decomposition_of_subdomain_space}
\end{equation}
corresponding to the \emph{substructure space} $\widetilde{\Lambda}_{\Gamma\Delta}^{i}$, 
and the \emph{coarse space} on substructure $\Omega^{i}$, $\widetilde{\Lambda}_{\Gamma C}|_{\Omega^{i}}$, respectively.
To analyse this decomposition, 
{let us recall that $S^{i}$ is a positive semi-definite matrix and write
(\ref{eq:bddc-substructure-problem-with-schur-for-coarse-basis}) in detail as}
\begin{align}
\label{eq:bddc-substructure-problem-with-schur-for-coarse-basis-1}%
S^{i}\Phi_{\Gamma}^{i} & = D^{iT} L^{i}, \\
\label{eq:bddc-substructure-problem-with-schur-for-coarse-basis-2}%
D^{i} \Phi_{\Gamma}^{i}& = I.
\end{align}
From (\ref{eq:bddc-substructure-problem-with-schur-for-coarse-basis-1}),
\begin{equation}
\mathrm{range}\,(S^{i} \Phi_{\Gamma}^{i}) \subset \mathrm{range}\,D^{iT} \perp \mathrm{null}\,D^{i},
\label{eq:S-orthogonality}
\end{equation}
which in turn, similarly to \cite[Lemma~8]{Mandel-2005-ATP}, 
gives for any $\chi^{i}_{\Delta} \in \mathrm{null}\,D^{i}$ and $\xi^{i}_{C} \in \mathrm{range}\,\Phi_{\Gamma}^{i}$
\begin{equation}
\label{eq:S-orthogonality2}
\chi^{iT}_{\Delta} S^{i} \xi^{i}_{C} = 0.
\end{equation}
From (\ref{eq:bddc-substructure-problem-with-schur-for-coarse-basis-2}),
the matrix $\Phi_{\Gamma}^{i}$ has full column rank,
and 
\begin{equation}
\mathrm{null}\,D^{i} \cap \mathrm{range}\,\Phi_{\Gamma}^{i} = {0}.
\end{equation}
Finally, from Assumption~\ref{as:nullS} and (\ref{eq:decomposition_of_subdomain_space}), 
\begin{equation}
\mathrm{null}\,S^{i} \subset \mathrm{range}\,\Phi_{\Gamma}^{i}.
\end{equation}

Decomposition of the subdomain space (\ref{eq:decomposition_of_subdomain_space})
implies decomposition of a function $\zeta^{i} \in {\Lambda}_{\Gamma}^{i}$ to $\zeta^{i} =  \zeta^{i}_{\Delta} + \zeta^{i}_{C}$,
where $\zeta^{i}_{\Delta} \in \mathrm{null}\,D^{i}$, $\zeta^{i}_{C} \in \mathrm{range}\,\Phi_{\Gamma}^{i}$, 
and $\zeta^{iT}_{\Delta} S^{i} \zeta^{i}_{C} = 0$ by (\ref{eq:S-orthogonality2}).

Let us first analyse the substructure corrections. 
Following \cite[Section~3.3]{Benzi-2005-NSS}, the matrix of 
(\ref{eq:BDDC-substructure-problem-with-Schur})
is invertible due to Assumption~\ref{as:nullS}.
If we define, in addition, a matrix $Q^{i}$ with orthonormal columns forming a basis of $\mathrm{null}\,D^{i}$, i.e. 
\begin{equation}
\mathrm{range}\,Q^{i} = \mathrm{null}\,D^{i}, \qquad Q^{iT}Q^{i} = I,
\end{equation}
we have
\begin{equation}
\left[
\begin{array}
[c]{cc}%
I_{{\Lambda}_{\Gamma}^{i}} & 0 
\end{array}
\right]  
\left[
\begin{array}
[c]{cc}%
-S^{i} & D^{iT}\\
D^{i} & 0
\end{array}
\right]^{-1} 
\left[
\begin{array}
[c]{c}%
I_{{\Lambda}_{\Gamma}^{i}} \\ 
0 
\end{array}
\right]  
= 
-Q^{i}\left(Q^{iT} S^{i} Q^{i}\right)^{-1}Q^{iT}.
\end{equation}
The matrix $\left(Q^{iT} S^{i} Q^{i}\right)^{-1}$ is symmetric positive definite,
and consequently, for any $\zeta^{i} \in {\Lambda}_{\Gamma}^{i}$,
\begin{equation}
-\zeta^{iT}Q^{i}\left(Q^{iT} S^{i} Q^{i}\right)^{-1}Q^{iT}\zeta^{i} \le 0
\end{equation}
with equality if $\zeta^{i} = \zeta^{i}_{C} \in \mathrm{range}\,\Phi_{\Gamma}^{i}$.

Next, let us turn towards the coarse correction.
Formula~(\ref{eq:scci_def}) for$~S_{CC}^{i}$ can be written equivalently as
\begin{equation}
S_{CC}^{i}=-\Phi_{\Gamma}^{iT}S^{i}\Phi_{\Gamma}^{i}.
\end{equation}
{Since the term on the right-hand side is just a (negative) Galerkin projection of 
the positive semi-definite matrix $S^{i}$, matrix $S_{CC}^{i}$ is symmetric negative semi-definite.
If at least one substructure is equipped with {natural} boundary conditions, the matrix $S_{CC}$ assembled by
(\ref{eq:Scc_def}) becomes symmetric negative definite, and so is $S_{CC}^{-1}$.}

We have just verified the negative definiteness of the principal parts of $\widetilde{S}^{-1}$,
and the desired positive definiteness is obtained through the change of sign in front of the braces
in~(\ref{eq:Stilde}).
\end{proof}

In view of Lemma~\ref{lemma:positive_definite_preconditioner},
the standard condition number bound follows
from \cite[Lemma~2]{Mandel-2007-BFM}. 

\begin{theorem}
\label{th:bddc_bound} The condition number~$\kappa$ of the preconditioned
operator~$M_{BDDC}\widehat{S}$ satisfies
\begin{equation}
\kappa\leq\omega=\max_{\lambda_{\Gamma}\in\widetilde{\Lambda}_{\Gamma}}%
\frac{\left\Vert RR^{T}W\lambda_{\Gamma}\right\Vert _{S}^{2}}{\left\Vert
\lambda_{\Gamma}\right\Vert _{S}^{2}}.\label{eq:cond-bound-algebraic}%
\end{equation}

\end{theorem}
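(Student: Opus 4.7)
The plan is to verify the hypotheses of the standard two-level abstract additive Schwarz framework in \cite[Lemma~2]{Mandel-2007-BFM}, which then delivers the bound (\ref{eq:cond-bound-algebraic}) immediately. That framework requires three ingredients: (i) a symmetric positive definite operator $\widehat{S}$ on the global space $\widehat{\Lambda}_{\Gamma}$, (ii) a symmetric positive definite operator $\widetilde{S}$ on a larger partially continuous space $\widetilde{\Lambda}_{\Gamma}$, with $\widehat{\Lambda}_{\Gamma}\subset\widetilde{\Lambda}_{\Gamma}$ as in (\ref{eq:subspaces}), and (iii) a projection-type averaging operator $E$ from $\widetilde{\Lambda}_{\Gamma}$ onto $\widehat{\Lambda}_{\Gamma}$ realised through the weights $W^{i}$.

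First, I would collect ingredient (i) from Theorem \ref{th:sspd}, which asserts that $\widehat{S}$ is symmetric positive definite. For (ii), I would invoke Lemma \ref{lemma:positive_definite_preconditioner}, which guarantees that the operator $\widetilde{S}^{-1}$ defined implicitly by (\ref{eq:Stilde}) is symmetric positive definite on $\widetilde{\Lambda}_{\Gamma}$; consequently $\widetilde{S}$ exists and is SPD on $\widetilde{\Lambda}_{\Gamma}$. For (iii), the natural choice is $E=RR^{T}W$ acting on $\Lambda_{\Gamma}$; the partition-of-unity property $\sum_{i=1}^{N_{S}}R^{iT}W^{i}R^{i}=I$ ensures that $E$ acts as the identity on the continuous subspace $\widehat{\Lambda}_{\Gamma}$, and therefore $E$ restricted to $\widetilde{\Lambda}_{\Gamma}$ is a projection onto $\widehat{\Lambda}_{\Gamma}$.

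Next, I would rewrite the preconditioned operator using (\ref{eq:Mbddc-formal}): $M_{BDDC}\widehat{S}=R^{T}W\widetilde{S}^{-1}WR\,\widehat{S}$. A standard similarity argument shows that in the $\widehat{S}$-inner product this operator is self-adjoint, so its condition number equals the ratio of the extreme generalised eigenvalues of $\widehat{S}\eta=\lambda\, M_{BDDC}^{-1}\eta$. The abstract lemma then bounds $\kappa$ by the operator norm of $E\colon\widetilde{\Lambda}_{\Gamma}\to\widehat{\Lambda}_{\Gamma}$ in the $\widetilde{S}$-norm. Because $\widetilde{S}$ agrees with $S=\operatorname{diag}(S^{i})$ on the decomposition (\ref{eq:decomposition_of_subdomain_space}) built from the substructure splitting and the coarse space, one has $\|\mu\|_{\widetilde{S}}^{2}=\|\mu\|_{S}^{2}$ for every $\mu\in\widetilde{\Lambda}_{\Gamma}$, so the operator norm reduces to the right-hand side of (\ref{eq:cond-bound-algebraic}).

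The main obstacle is the careful identification, in the setting of the present mixed-hybrid formulation with coupled dimensions, of the abstract object $\widetilde{S}$ with the block-diagonal operator $S$ on the partially continuous space. Since $\widetilde{S}$ is defined only implicitly through its inverse in (\ref{eq:Stilde}), one has to verify that the substructure-correction and coarse-correction terms combine, via Assumption \ref{as:nullS} and the $S^{i}$-orthogonality (\ref{eq:S-orthogonality2}), to give precisely $\widetilde{S}^{-1}$ on $\widetilde{\Lambda}_{\Gamma}$; this is exactly the content of the proof of Lemma \ref{lemma:positive_definite_preconditioner}, so nothing beyond citing \cite[Lemma~2]{Mandel-2007-BFM} is needed to close the argument.
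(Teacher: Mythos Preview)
Your proposal is correct and takes essentially the same approach as the paper: both rely on Lemma~\ref{lemma:positive_definite_preconditioner} to guarantee that $\widetilde{S}^{-1}$ is symmetric positive definite on $\widetilde{\Lambda}_{\Gamma}$ and then invoke \cite[Lemma~2]{Mandel-2007-BFM} directly. In fact the paper's own argument is just the single sentence ``In view of Lemma~\ref{lemma:positive_definite_preconditioner}, the standard condition number bound follows from \cite[Lemma~2]{Mandel-2007-BFM},'' so your more detailed verification of the abstract hypotheses (the SPD property of $\widehat{S}$ from Theorem~\ref{th:sspd}, the projection property of $E=RR^{T}W$, and the identification of the $\widetilde{S}$-norm with the block-diagonal $S$-norm on $\widetilde{\Lambda}_{\Gamma}$) simply spells out what the paper leaves implicit.
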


{The norms in (\ref{eq:cond-bound-algebraic}) are induced by 
the matrix $S$ defined in (\ref{eq:block_operators}) for all functions 
$\lambda_{\Gamma}\in\widetilde{\Lambda}_{\Gamma}$.}

In addition, in the case of a single mesh dimension in either 2D or 3D, 
{and under the assumption of substructure-wise constant hydraulic conductivities,}
it has been also derived in~\cite[Lemma~5.5 and Theorem~6.1]{Tu-2007-BAF} that the
condition number bound$~\omega$ satisfies
\begin{equation}
\omega\leq C\left(  1+\log\frac{H}{h}\right)  ^{2}%
,\label{eq:cond-bound-analytic}%
\end{equation}
where $H$ is the characteristic size of geometric substructures. 
{
We note that
the bound~(\ref{eq:cond-bound-analytic}) implies that, for a fixed relative subdomain size~$H/h$,
the condition number is independent of the problem size.

It is worth emphasising that Theorem~\ref{th:bddc_bound} is also valid for combined mesh dimensions.
However, several simplifications are employed in \cite{Tu-2007-BAF} to obtain 
(\ref{eq:cond-bound-analytic}), which are not satisfied in the set-up considered in this paper.
In particular 
(i) hydraulic conductivity coefficient here is, in general, not substructure-wise constant nor isotropic, 
{(ii) it is not clear whether, in presence of fractures, the interpolation operator onto 
a~conforming mesh introduced in \cite{Tu-2007-BAF}
can be constructed and 
bounded in the $H^1$ norm.}
}

\section{Scaling weights in BDDC}
\label{sec:scaling}

Let us now discuss the choice of entries in the diagonal weight matrices $W^{i}%
$. These matrices play an
important role in the BDDC method, both in the theory,
cf.~Theorem~\ref{th:bddc_bound} or
\cite{Tu-2007-BAF,Mandel-2007-BFM,Mandel-2003-CBD}, and in the
computations, cf.~\cite{Klawonn-2008-AFA,Certikova-2013-SIW}. Three possible
choices are also studied numerically in Section~\ref{sec:results_engineering}.
The basic choice is presented by the \emph{arithmetic average} taken from
values at the neighbouring substructures. In this simplest construction, the
entry corresponding to Lagrange multiplier $\lambda_{\Gamma,j}^{i}$ is given
by the inverse counting function as
\begin{equation}
W_{jj}^{i}=\frac{1}{card(\mathcal{I}_{j})}, \label{eq:scaling_arithmetic}%
\end{equation}
where $card(\mathcal{I}_{j})$ is the number of substructures in the set
$\mathcal{I}_{j}$ of indices of substructures to which $\lambda_{\Gamma,j}%
^{i}$ belongs. For 2D or 3D meshes without fractures, $W_{jj}^{i}=\nicefrac{1}{2}$
for Raviart-Thomas elements. However, since several two-dimensional fractures
can meet in our setting, smaller weights can {occasionally}
appear at such regions.

While arithmetic average is sufficient for problems with homogeneous
coefficients, it is well known that for problems with large variations in
material properties along the interface, it is necessary to incorporate their
values into the (weighted) average to obtain a robust method. This gives rise to
the~$\rho$-scaling, for which
\begin{equation}
\label{eq:scaling_rho}W^{i}_{jj} = \frac{\rho_{i}}{\sum_{k = 1}%
^{card(\mathcal{I}_{j})} \rho_{k}},
\end{equation}
where $\rho_{k}$ is a material characteristic for substructure~$\Omega^{k}$.
This choice is robust with respect to jumps in coefficients across the interface,
cf.~\cite{Tu-2007-BAF, Toselli-2005-DDM}; however, coefficients are assumed
constant for each substructure. This requirement is very restrictive for
practical computations with quickly varying coefficients, and we employ a
generalisation which takes into account the material coefficient of the
element to which the Lagrange multiplier $\lambda_{\Gamma,j}^{i}$ corresponds.
In our case, we use $\rho_{i} = d/\mbox{tr}\,(\Bbbk^{-1})$, where $d\in\{1,2,3\}$ is the
dimension of the element $T^{i}$. 
This value can be seen as a representative hydraulic conductivity on the element.

Finally, we propose a modification of the popular scaling by diagonal
stiffness~\cite{Klawonn-2008-AFA}. In the usual diagonal stiffness approach,
the optimal weight, which is the diagonal entry of the Schur complement, is
approximated by the diagonal entry of the original substructure matrix.
However, this is not directly applicable to the indefinite system
(\ref{eq:hybrid-fractured-substructure}), as, in general, matrix $C^{i}$
contains only seldom nonzeros on the diagonal. For this reason, we approximate
the diagonal of the Schur complement as
\begin{equation}
\label{eq:scaling_diagonal}W^{i}_{jj} = \widetilde{C}^{i}_{\Gamma\Gamma,jj} +
\frac{1}{A^{i}_{kk}},
\end{equation}
where the index $k$ corresponds to the row in block $A^{i}$ of the element
face to which the Lagrange multiplier~$\lambda_{\Gamma,j}^{i}$ belongs.

Using the diagonal stiffness scaling in connection with the standard Lagrange
finite elements may lead to poor convergence for problems with rough interface
\cite{Klawonn-2008-AFA,Certikova-2013-SIW}, for which the diagonal stiffness
can vary quickly even for smooth problems with constant coefficients 
{on uniform meshes}. 
This is a severe issue for practical computations, in which graph partitioners are
typically used for creating substructures. 
However, this issue is 
{not as pronounced}
for Raviart-Thomas elements, for which  
only one element contributes to
the stiffness on the diagonal at an interface degree of freedom,
{and thus irregularities caused by changing number of elements contributing to an interface weight 
cannot occur.}
On the other
hand, an advantage of the diagonal stiffness scaling is the fact,
that---unlike the $\rho$-scaling---it takes into account the shape and
relative sizes of elements, which vary considerably in engineering
applications, as well as the effect of $\delta_{d}$ introduced in~(\ref{eq:weak_term_a}) 
and~(\ref{eq:weak_term_f}). Unless stated otherwise,
scaling~(\ref{eq:scaling_diagonal}) is used in the computations presented in
Section~\ref{sec:numerical}.

\section{The parallel solver}

\label{sec:solver}

The basis for an efficient parallel implementation of the method described in
previous sections was obtained by combining two existing open-source software
packages: the finite element package \textsl{Flow123d}%
\footnote{%
\url{http://flow123d.github.io}} (version 1.6.5) for
underground fluid flow simulations and the BDDC-based solver \textsl{BDDCML}%
\footnote{\url{http://users.math.cas.cz/~sistek/software/bddcml.html}} (version
2.0) used for the solution of the resulting system of equations. However, minor
changes have been made to both codes to support the specific features, such as
the weights (\ref{eq:scaling_rho}) and~(\ref{eq:scaling_diagonal}).

The \textsl{Flow123d} package has been developed for modelling complex
behaviour of underground water flow and pollution transport. However, only the
simple flow in a fully saturated porous media described by Darcy's law is
considered in this paper. To accurately account for fractures in the medium,
such as granite rock, the solver allows us to combine finite elements of
different dimensions: the three-dimensional elements of porous media are
combined with two-dimensional elements modelling planar fractures, which may be in
turn connected in one-dimensional elements for channels. Raviart-Thomas
elements are consistently used throughout such discretisation. Although the
fractures are also modelled as porous media, their hydraulic conductivity is by
orders of magnitude higher than that of the main porous material of the
domain. In addition, the finite element discretisations are typically not
uniform within the domain, and the relative sizes of elements may also vary by
orders of magnitude. Both these aspects give rise to very poorly conditioned
linear systems, which are very challenging for iterative solvers.
The \textsl{Flow123d} solver has been developed for over 10 years and it is
written in C/C++ programming language with object-oriented design and
parallelism through MPI.

The \textsl{BDDCML} is a library for solving algebraic systems of linear
equations by means of the BDDC method. The package supports the
Adaptive-Multilevel BDDC method \cite{Sousedik-2013-AMB} suitable for very
high number of substructures and computer cores, although we only use the
standard (non-adaptive two-level) BDDC method from
\cite{Tu-2007-BAF,Dohrmann-2003-PSC} for the purpose of this paper. The
\textsl{BDDCML} library is typically interfaced by finite element packages,
which may provide the division into substructures. This feature is used in our
current implementation, in which the division into non-overlapping
substructures is constructed within the \textsl{Flow123d} using the
\textsl{METIS} (version 5.0) package \cite{Karypis-1998-FHQ}. One substructure
is assigned to a processor core in the current set-up of the parallel solver,
although \textsl{BDDCML} is more flexible in this respect. The library
performs the selection of additional corners by the face-based algorithm from
\cite{Sistek-2012-FSC}. The \textsl{BDDCML} package is written in Fortran 95
and parallelised through MPI.

The \textsl{BDDCML} solver relies on a serial instance of the \textsl{MUMPS}
direct solver \cite{Amestoy-2000-MPD} for the solution of each local discrete
Dirichlet problem (\ref{eq:discrete-dirichlet-problem-substructure}) as well
as for the solution of each local discrete Neumann
problem~(\ref{eq:BDDC-substructure-problem}). The coarse
problem~(\ref{eq:BDDC-coarse-problem}) is solved by a parallel instance of
\textsl{MUMPS}. The main difference from using \textsl{BDDCML} for symmetric
positive definite problems is the need to use the $LDL^{T}$ factorisation of
general symmetric matrices for problems
(\ref{eq:discrete-dirichlet-problem-substructure}), which are saddle-point
(i.e. indefinite) systems in the present setting.

Although the original system (\ref{eq:hybrid-m-fractured}) is indefinite,
system (\ref{eq:S_hat-system}) is symmetric positive definite, which allows the
use of the preconditioned conjugate gradient (PCG) method. One step of BDDC is
used as the preconditioner within the PCG method applied to problem
(\ref{eq:S_hat-system}). The matrix of problem (\ref{eq:S_hat-system}) is not
explicitly constructed in the solver, and only its actions on vectors are
computed following (\ref{eq:discrete-dirichlet-problem-substructure})--(\ref{eq:schur-assembly}).

\begin{remark}
{In our implementation, we change the sign neither in 
the action of $S^i$ (\ref{eq:substructure-schur-multiple}) nor in the action of  
the preconditioner $M_{BDDC}$ (\ref{eq:BDDC-preconditioner}).}
Since both are then strictly negative definite, the product $M_{BDDC}\widehat{S}$ is 
the same as if both signs were changed, and the PCG method runs correctly.
In this way, no changes are necessary 
in an implementation developed for symmetric positive definite problems.
\end{remark}

\section{Numerical results}

\label{sec:numerical}

In this section, we investigate the performance of the algorithm and its
parallel implementation on several benchmark problems in 2D and 3D, and on two
geoengineering problems of existing localities in 3D. 
{For the two benchmark problems without fractures 
we perform weak scaling tests. 
For the benchmark problem with fractures
and for the geoengineering problems, 
we perform strong scaling tests with the problem
size fixed and increasing number of processor cores.}
In all cases, the PCG
iterations are run until the relative norm of residual $\|r^{(k)}%
\|/\|\widehat{b}\| < 10^{-7}$. If not stated otherwise, the proposed scaling
by diagonal stiffness (\ref{eq:scaling_diagonal}) is used within the averaging
operator of BDDC.

\subsection{Results for benchmark problems}

\label{sec:results_benchmark}

First, the performance of the solver is investigated on a unit square and a
unit cube discretised solely using two-dimensional and three-dimensional
finite elements, respectively. For this reason, block $\ol{C}$ in
system~(\ref{eq:hybrid-m-fractured}), which is related to combining elements
of different dimension, is zero, and the problem reduces to the standard
problem~(\ref{eq:hybrid-m}). The sequence of unstructured meshes is
approximately uniform for both problems, and the problems do not contain any
jumps in material coefficients. 
In Figs.~\ref{fig:square2d_problem} and \ref{fig:cube3d_problem}, example meshes
and the resulting pressure head and velocity fields are presented. While
gravity is present in the 3D case, its effect is not considered in the 2D case.

The results of the weak scaling tests are summarised in
Tables~\ref{tab:2d_square_weak_scaling} and \ref{tab:3d_cube_100k_weak_scaling}.
{To give a better view, the resulting solution times for different problem sizes are also
visualised in Fig.~\ref{fig:timing_100k}.}
In these tables, $N$ denotes the
number of substructures and processors, $n$ is the size of the global problem
(\ref{eq:hybrid-m-fractured}), $n_{\Gamma}$ is the size of the interface
problem (\ref{eq:S_hat-system}), $n_{f}$ denotes number of faces, $n_{c}$
denotes number of corners, `its.' stands for resulting number of PCG
iterations, and `cond.' is the approximate condition number computed from the
Lanczos sequence in PCG. We report separately the time spent in preconditioner
set-up, the time spent by PCG iterations, and the total time for the whole solve.

In these weak scaling tests, the number of unknowns per core is kept
approximately constant around $10^{5}$. 
These weak scaling tests were performed using up to 
64 cores of the {SGI Altix UV} supercomputer at the Supercomputing Centre of the Czech Technical University
in Prague.
{The computer contains twelve Intel Xeon processors, each with six cores at frequency 2.67 GHz.
Intel compilers version 12.0 were used.}

The numbers of PCG iterations and condition number estimates in
Tables~\ref{tab:2d_square_weak_scaling} and
\ref{tab:3d_cube_100k_weak_scaling} confirm the expected numerical scalability
of the BDDC method, which is well known for symmetric positive definite
problems as well as for Darcy's flow problems
\cite{Tu-2007-BAF,Mandel-2003-CBD}.
{The slight irregularities in the condition number in Table~{\ref{tab:3d_cube_100k_weak_scaling}}
are probably caused by using non-nested unstructured meshes.}

Looking at times in these tables and in Fig.~\ref{fig:timing_100k}, we can see
almost optimal scaling, with only mild growth of times with number of cores.
The numbers of PCG iterations are higher in 3D, and the time spent in PCG
iterations grows proportionally, while the time spent in the set-up phase does
not differ considerably between two-dimensional and three-dimensional setting
and dominates the overall time.

\begin{figure}[ptbh]
\includegraphics[width=0.47\textwidth]{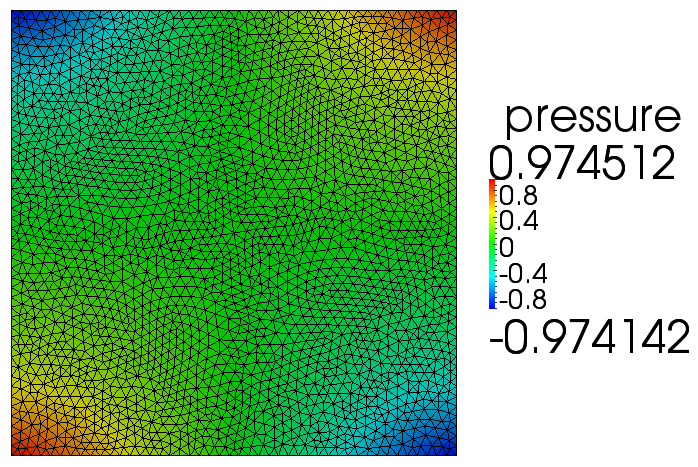} 
\includegraphics[width=0.50\textwidth]{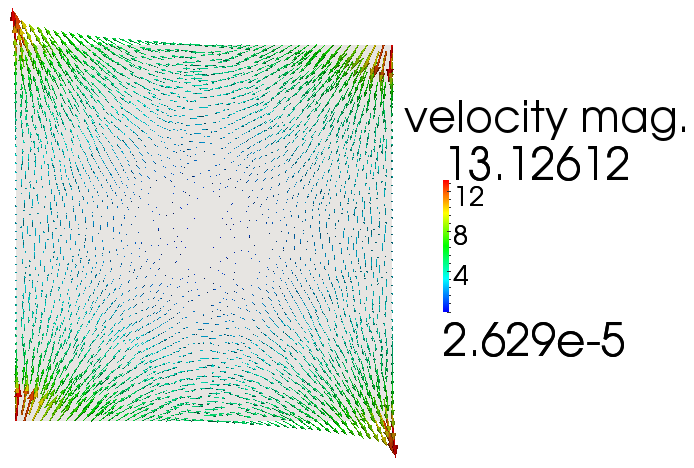}
\caption{\label{fig:square2d_problem}
Example of solution to the model square problem containing only 2D
elements, plot of pressure head with mesh (left) and velocity vectors (right).}
\end{figure}

\begin{figure}[ptbh]
\includegraphics[width=0.47\textwidth]{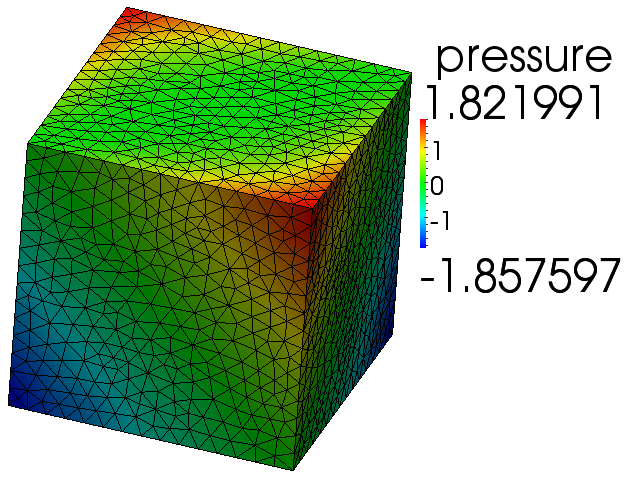} 
\includegraphics[width=0.50\textwidth]{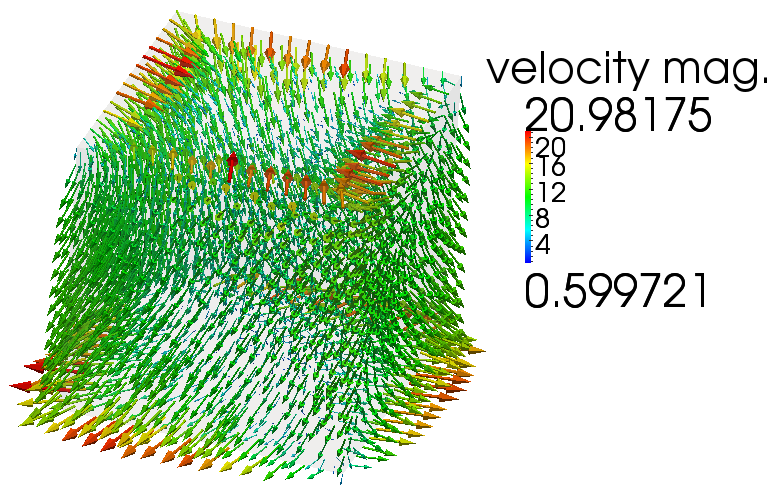}
\caption{\label{fig:cube3d_problem}
Example of solution to the model cube problem containing only 3D
elements, plot of pressure head with mesh (left) and velocity vectors (right).}
\end{figure}

\begin{table}[ptbh]
\begin{center}
\begin{tabular}
[c]{|ccc|ccc|cc|ccc|}\hline
\multirow{2}{*}{$N$} & \multirow{2}{*}{$n$} & \multirow{2}{*}{$n/N$} &
\multirow{2}{*}{$n_{\Gamma}$} & \multirow{2}{*}{$n_f$} &
\multirow{2}{*}{$n_c$} & \multirow{2}{*}{its.} & \multirow{2}{*}{cond.} &
\multicolumn{3}{c|}{time (sec)}\\
&  &  &  &  &  &  &  & set-up & PCG & solve\\\hline
2 & 207k & 103k & 155 & 1 & 2 & 7 & 1.37 & 8.3 & 1.6 & 9.9\\
4 & 440k & 110k & 491 & 5 & 10 & 8 & 1.60 & 12.2 & 2.2 & 14.4\\
8 & 822k & 103k & 1.2k & 13 & 26 & 9 & 1.78 & 11.0 & 2.5 & 13.5\\
16 & 1.8M & 111k & 2.8k & 33 & 66 & 8 & 1.79 & 14.3 & 2.7 & 17.0\\
32 & 3.3M & 104k & 5.9k & 74 & 148 & 9 & 1.79 & 12.1 & 3.3 & 15.4\\
64 & 7.2M & 113k & 13.0k & 166 & 332 & 9 & 1.85 & 14.8 & 4.4 & 19.2\\\hline
\end{tabular}
\end{center}
\caption{\label{tab:2d_square_weak_scaling}
Weak scaling test for the 2D square problem, each substructure
problem contains approx. 100k unknowns.}
\end{table}

\begin{table}[ptbh]
\begin{center}
\begin{tabular}
[c]{|ccc|ccc|cc|ccc|}\hline
\multirow{2}{*}{$N$} & \multirow{2}{*}{$n$} & \multirow{2}{*}{$n/N$} &
\multirow{2}{*}{$n_{\Gamma}$} & \multirow{2}{*}{$n_f$} &
\multirow{2}{*}{$n_c$} & \multirow{2}{*}{its.} & \multirow{2}{*}{cond.} &
\multicolumn{3}{c|}{time (sec)}\\
&  &  &  &  &  &  &  & set-up & PCG & solve\\\hline
2 & 217k & 108k & 884 & 1 & 3 & 11 & 2.88 & 11.7 & 2.3 & 14.0\\
4 & 437k & 109k & 2.3k & 6 & 18 & 12 & 3.04 & 11.7 & 2.5 & 14.2\\
8 & 945k & 118k & 5.7k & 21 & 63 & 15 & 12.00 & 15.4 & 4.0 & 19.3\\
16 & 1.6M & 103k & 12.8k & 56 & 168 & 16 & 6.58 & 12.9 & 4.0 & 17.0\\
32 & 3.4M & 106k & 29.8k & 132 & 401 & 18 & 10.10 & 15.4 & 5.2 & 20.6\\
64 & 6.1M & 95k & 59.6k & 307 & 931 & 19 & 16.58 & 13.7 & 6.3 & 20.0\\\hline
\end{tabular}
\end{center}
\caption{\label{tab:3d_cube_100k_weak_scaling}
Weak scaling test for the 3D cube problem, each substructure problem
contains approx. 100k unknowns.}
\end{table}

\begin{figure}[ptbh]
\begin{center}
\includegraphics[width=0.48\textwidth]{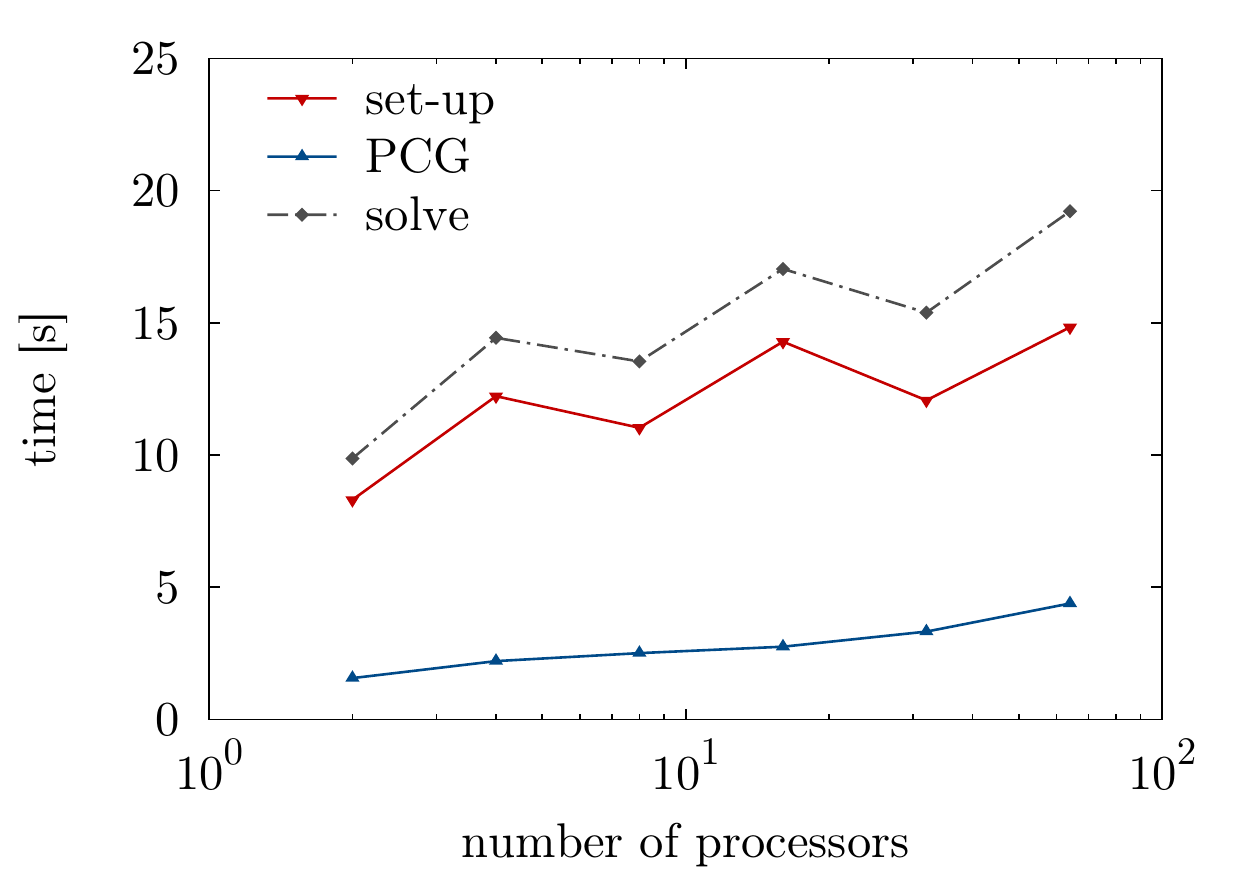} 
\includegraphics[width=0.48\textwidth]{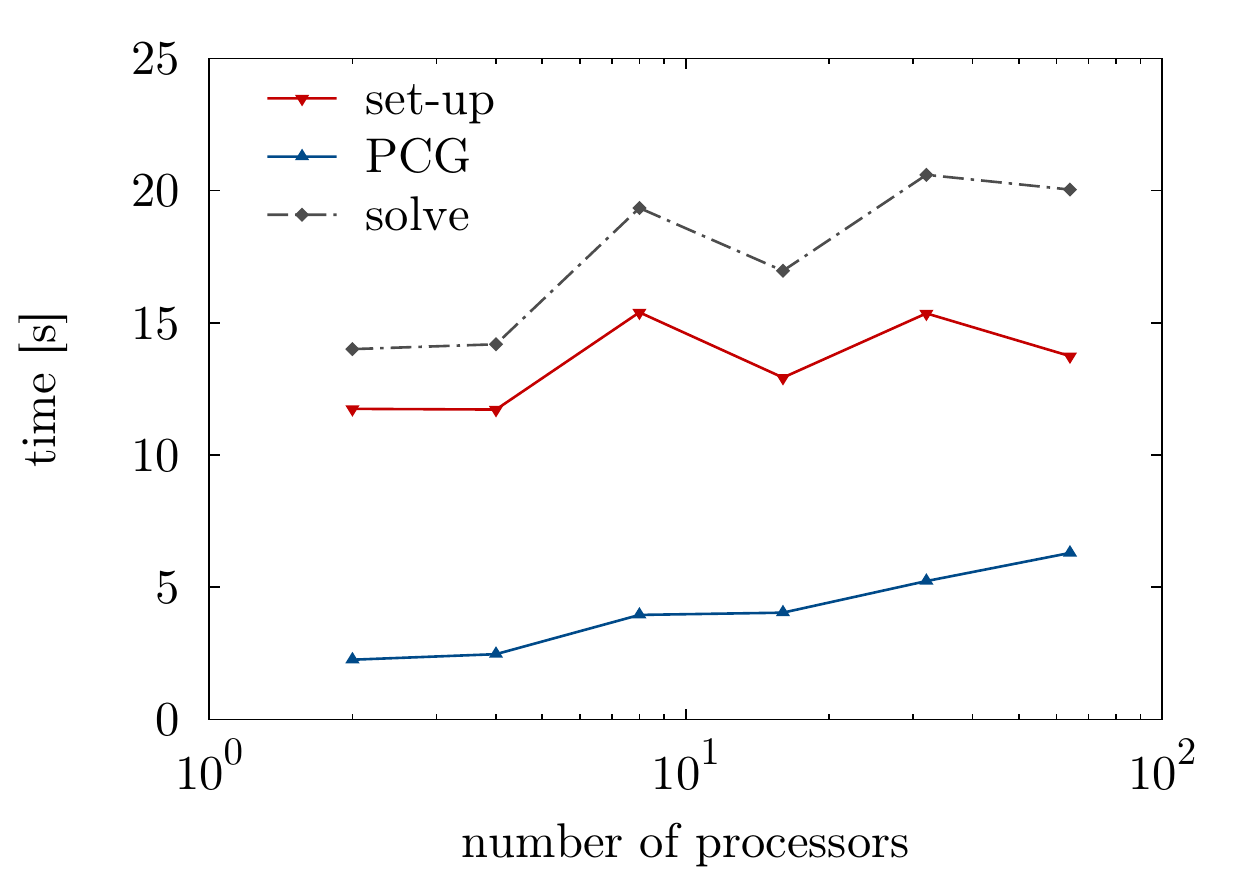}
\end{center}
\caption{\label{fig:timing_100k}
Weak scaling test for the 2D square problem (left), and the 3D cube
problem (right), approx. 100k unknowns per core.
Computational times separately
for set-up and PCG phases, and their sum (solve).}
\end{figure}

The next benchmark problem is considerably more complicated. It consists again
of a unit cube, which now contains four planar fractures aligned with
diagonals of a 2D cross-section. These four planar fractures meet at a 1D
channel in the centre of the cross-section. Therefore, the problem contains
the full possible combination of 3D, 2D and 1D finite elements. The tensor
$\Bbbk$ is isotropic, thus it is just a scalar multiple of identity. 
The corresponding scalar value is set to 10, 1, and 0.1 for 1D, 2D, and 3D elements, respectively.

We perform a strong scaling test with this problem, keeping the mesh size
fixed with approximately 2.1 million elements and 14.6 million degrees of
freedom. In Fig.~\ref{fig:cube123d_problem}, the computational mesh and the
resulting pressure head and velocity fields are presented. This scaling test
was computed on the Cray XE6 supercomputer \emph{Hector} at the Edinburgh
Parallel Computing Centre.
{This supercomputer is composed of 2816 nodes, 
each containing two AMD Opteron Interlagos processors with 16 cores at 2.3 GHz.
GNU compilers version 4.6 were employed.}

Results of the strong scaling test are summarised in
Table~\ref{tab:123d_cube_strong_scaling}, and the computing times are
visualised also in Fig.~\ref{fig:timing_strong_123d} together with the
parallel speed-up. The reference value for computing speed-up is the time on
16 cores, and the speed-up on $np$ processors is computed as
\begin{equation}
s_{np} = \frac{16 \ t_{16}}{t_{np}},
\end{equation}
where $t_{np}$ is the time on $np$ processors.

We can see that the number of PCG iterations grows with the number of
substructures for this problem which is also confirmed by the growing
condition number estimate. While the time spent in set-up phase scales very
well, the time spent in PCG grows together with the number of iterations. The
reason for this growth seem to be related to the larger interface, at which
more numerical difficulties appear. This seems to be related to more 1D-2D and
2D-3D connections at the interface and makes this difficult problem a good candidate for
using the \emph{Adaptive BDDC} method \cite{Sousedik-2013-AMB,Mandel-2012-ABT}. 
However, this will be the subject of a separate study.

\begin{figure}[ptbh]
\begin{center}
\includegraphics[width=0.47\textwidth]{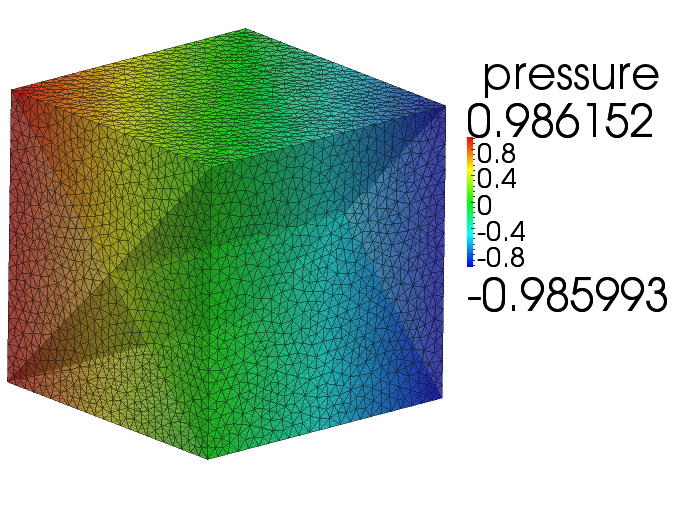}
\includegraphics[width=0.50\textwidth]{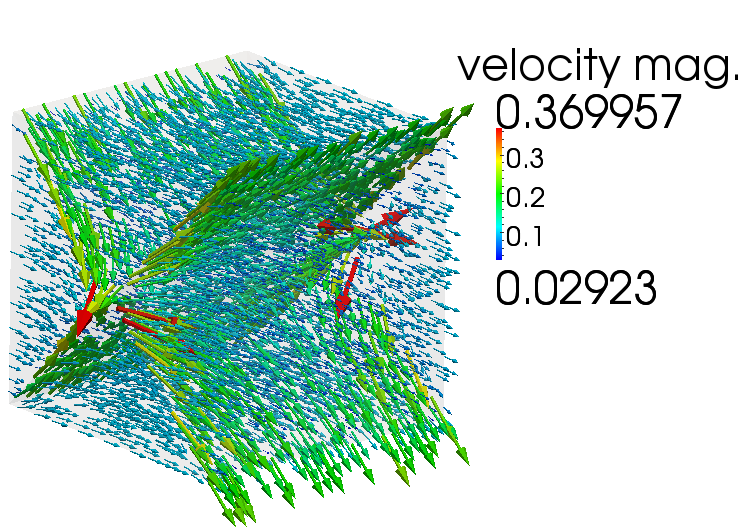}
\end{center}
\caption{\label{fig:cube123d_problem}
Example of solution to the model cube problem containing 1D, 2D, and
3D elements, plot of pressure head with mesh and fractures (left) and velocity
vectors (right).}
\end{figure}

\begin{table}[ptbh]
\begin{center}
\begin{tabular}
[c]{|cc|ccc|cc|ccc|}\hline
\multirow{2}{*}{$N$} & \multirow{2}{*}{$n/N$} &
\multirow{2}{*}{$n_{\Gamma}$} & \multirow{2}{*}{$n_f$} &
\multirow{2}{*}{$n_c$} & \multirow{2}{*}{its.} & \multirow{2}{*}{cond.} &
\multicolumn{3}{c|}{time (sec)}\\
&  &  &  &  &  &  & set-up & PCG & solve\\\hline
16 & 912k & 47k & 53 & 159 & 26 & 59.3 & 171.6 & 84.5 & 256.2\\
32 & 456k & 65k & 126 & 380 & 48 & 2091.0 & 90.1 & 109.8 & 200.0\\
64 & 228k & 86k & 301 & 914 & 81 & 1436.1 & 36.8 & 77.1 & 114.0\\
128 & 114k & 116k & 689 & 2076 & 109 & 2635.8 & 14.3 & 43.1 & 57.4\\
256 & 57k & 151k & 1436 & 4365 & 164 & 1700.5 & 6.7 & 31.2 & 38.0\\
512 & 28k & 196k & 3021 & 9244 & 254 & 42614.5 & 4.0 & 26.9 & 30.9\\\hline
\end{tabular}
\end{center}
\caption{\label{tab:123d_cube_strong_scaling}
Strong scaling test for the cube problem with 1D, 2D, and 3D
elements, size of the global problem is $n$ = 14.6M unknowns.}
\end{table}

\begin{figure}[ptbh]
\begin{center}
\includegraphics[width=0.48\textwidth]{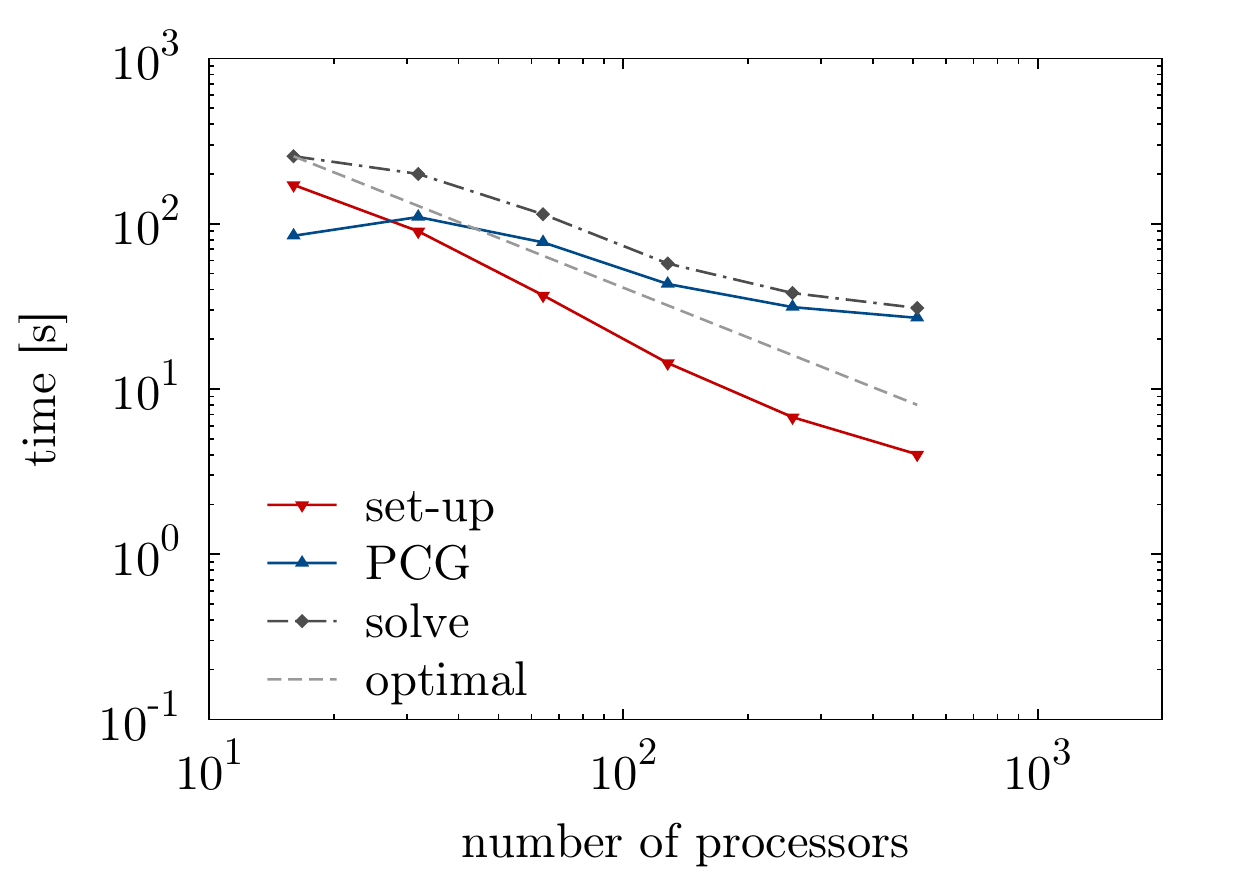} 
\includegraphics[width=0.48\textwidth]{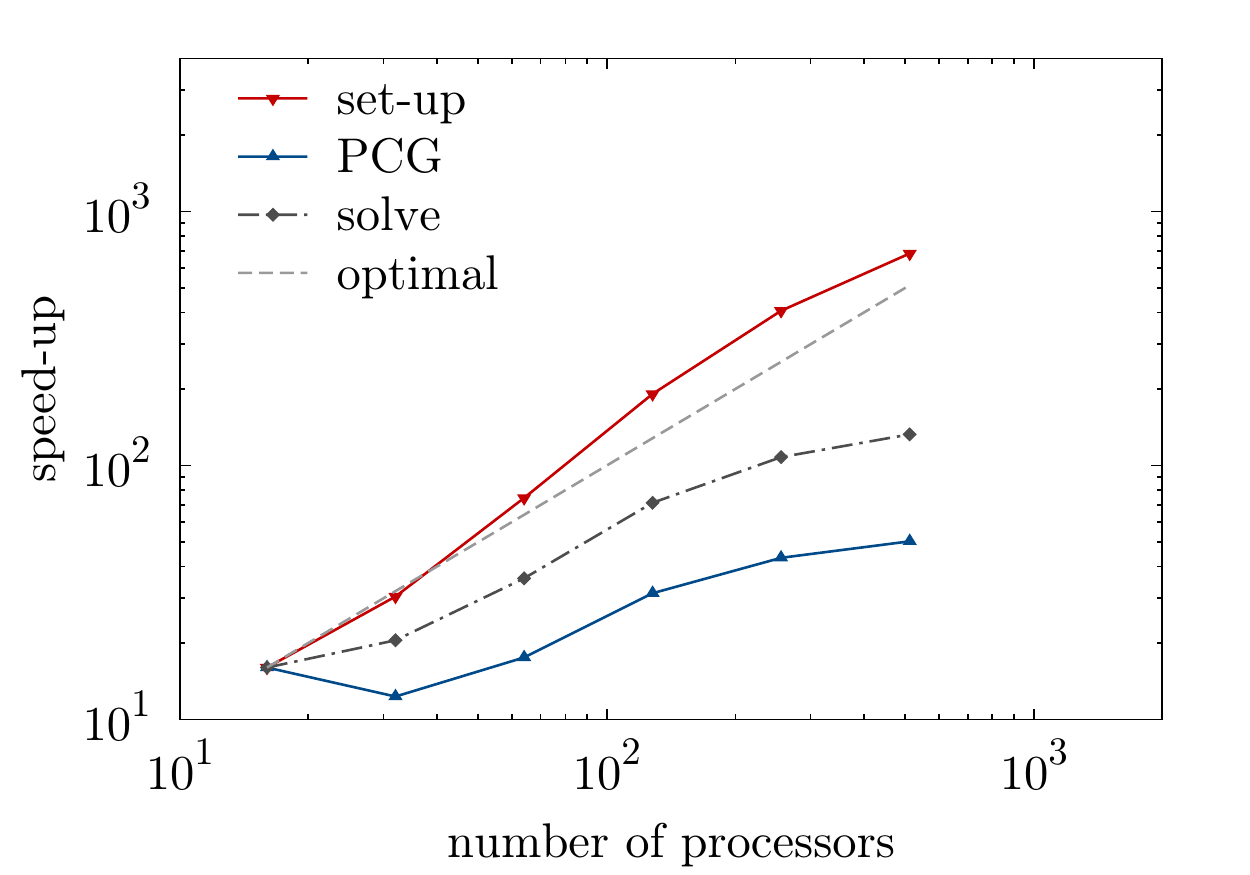}
\end{center}
\caption{\label{fig:timing_strong_123d}
Strong scaling test for the cube problem with 1D, 2D, and 3D elements
and 14.6M unknowns, computational time (left) and speed-up (right) separately
for set-up and PCG phases, and their sum (solve).}
\end{figure}

\subsection{Results for geoengineering problems}

\label{sec:results_engineering}

The performance of the algorithm and its parallel implementation has been
investigated on two engineering problems of underground flows within real
geologic locations. For both problems, the porous medium is fractured
granite rock, with the fractures modelled by two-dimensional elements.

The first problem is the \emph{Melechov locality}, which models one of the
candidate sites for a nuclear waste deposit to be build within the Czech
Republic in future. The goal is to model the underground flow and estimate the
speed at which an eventual radioactive pollution would spread. The
computational mesh contains 2.1 million finite elements resulting in 15
million unknowns. The geometry of the problem with the resulting distribution
of piezometric head and the finite element mesh is presented in
Fig.~\ref{fig:melechov_1}. The problem contains vertical two-dimensional
fractures visualised in Fig.~\ref{fig:melechov_2}. The maximal hydraulic
conductivity within the fractures is 6.3$\cdot$10$^{4}$ ms$^{-1}$, while the minimal
conductivity of the outer material is 6.0$\cdot$10$^{-3}$ ms$^{-1}$, the
transition coefficient $\sigma_{3}$ = 1 s$^{-1}$, and the effective thickness of
fractures $\delta_{2}$ = 0.1 m.

We perform a strong scaling test for this problem, keeping the problem size
fixed and increasing the number of substructures and computing cores. An
example of division into 64 substructures is presented in
Fig.~\ref{fig:melechov_2}. 
The scaling test was computed on the \emph{Hector} supercomputer.

Table~\ref{tab:melechov_large_strong_scaling} summarises the results of this
test. We can still see some growth of the number of iterations with the number
of substructures, which is however much milder than the growth observed for
the unit cube with fractures in Table~\ref{tab:123d_cube_strong_scaling}.
Correspondingly, the times reported in
Table~\ref{tab:melechov_large_strong_scaling} and visualised in
Fig.~\ref{fig:timing_Melechov_large} show an optimal scaling of the solver
over a large range of core counts.

\begin{figure}[ptbh]
\begin{center}
\begin{overpic}[width=0.7\textwidth]{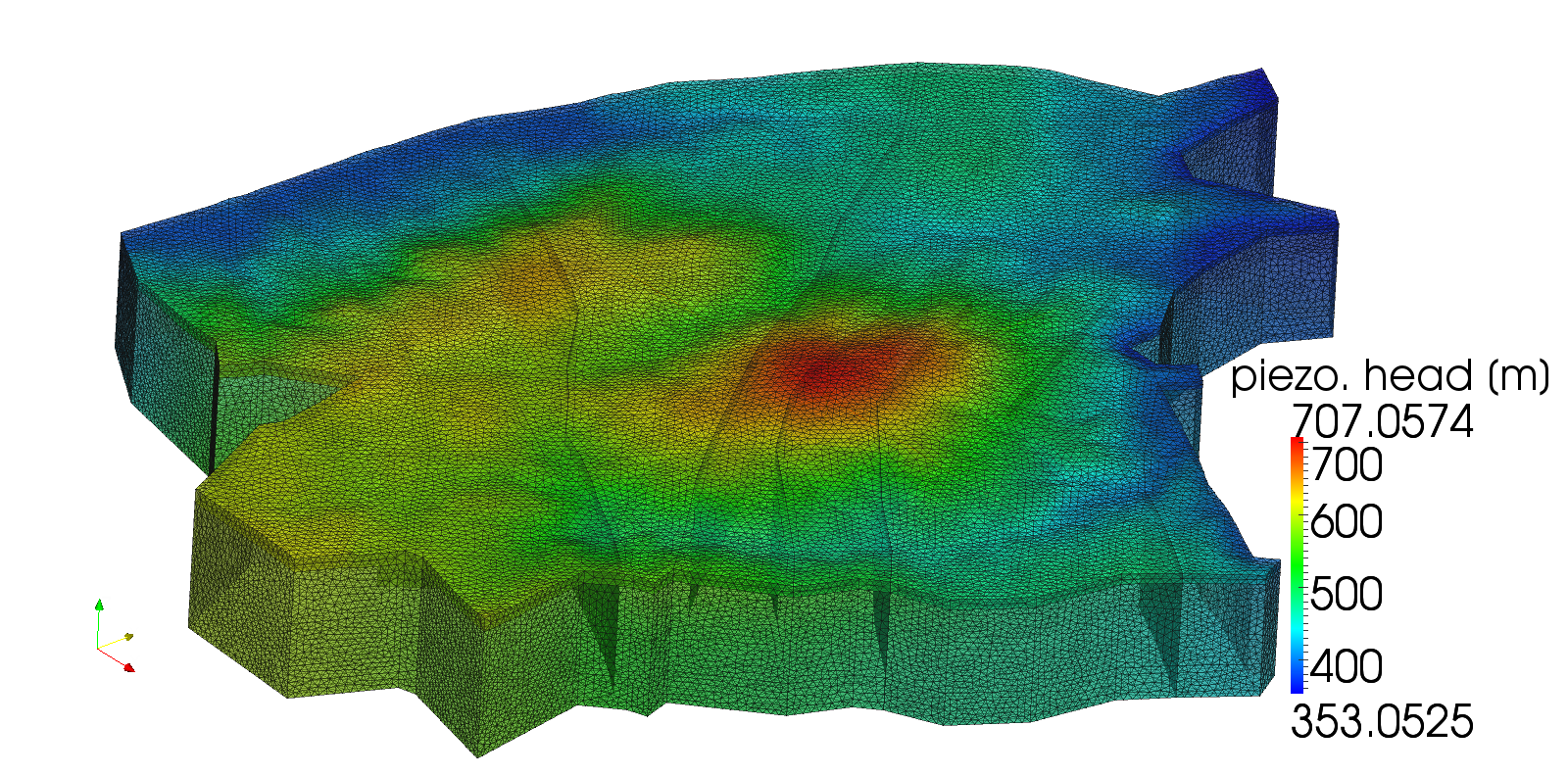}
\put(9,5){$x$}
\put(9,9){$y$}
\put(6,11.5){$z$}
\put(-10,18){$\Delta x=$9940 m}
\put(25,45){$\Delta y=$12100 m}
\put(87,35){$\Delta z=$1410 m}
\end{overpic}
\end{center}
\caption{\label{fig:melechov_1}
The problem of the \emph{Melechov locality} containing 2D and 3D
elements, mesh contains 2.1M elements and 15M unknowns. Plot of the
piezometric head. Data by courtesy of Ji{\v r}ina Kr{\' a}lovcov{\' a}.}
\end{figure}

\begin{figure}[ptbh]
\begin{center}
\includegraphics[width=0.4\textwidth]{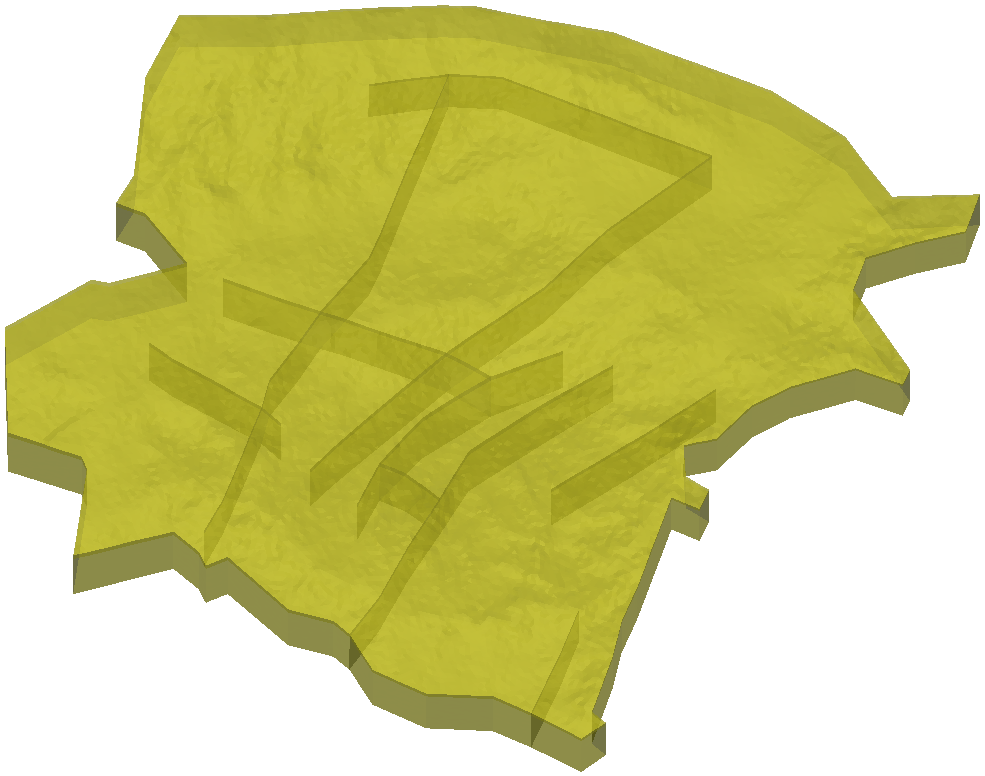} 
\includegraphics[width=0.4\textwidth]{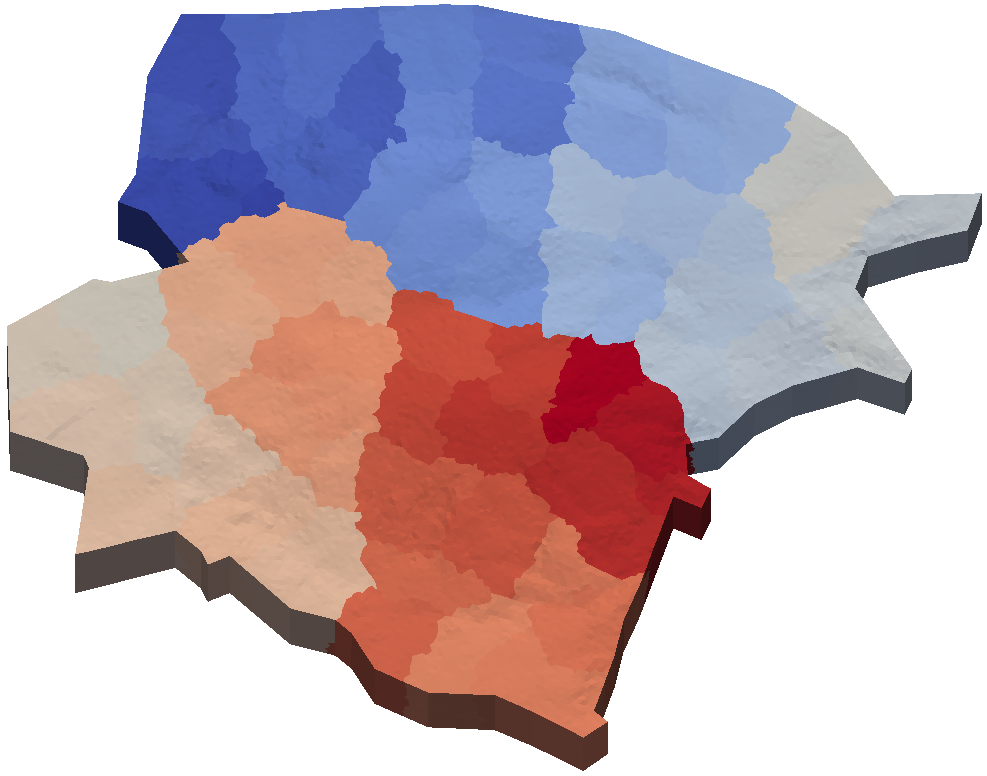}
\end{center}
\caption{\label{fig:melechov_2}The problem of the \emph{Melechov locality}; the system of fractures
(left) and an example division into 64 substructures (right).}
\end{figure}

\begin{table}[ptbh]
\begin{center}
\begin{tabular}
[c]{|cc|ccc|cc|ccc|}\hline
\multirow{2}{*}{$N$} & \multirow{2}{*}{$n/N$} &
\multirow{2}{*}{$n_{\Gamma}$} & \multirow{2}{*}{$n_f$} &
\multirow{2}{*}{$n_c$} & \multirow{2}{*}{its.} & \multirow{2}{*}{cond.} &
\multicolumn{3}{c|}{time (sec)}\\
&  &  &  &  &  &  & set-up & PCG & solve\\\hline
16 & 934k & 36k & 32 & 96 & 40 & 53.0 & 131.4 & 144.1 & 275.6\\
32 & 467k & 54k & 76 & 228 & 70 & 878.3 & 47.5 & 112.9 & 160.4\\
64 & 233k & 82k & 186 & 561 & 67 & 202.4 & 17.4 & 50.2 & 67.7\\
128 & 117k & 116k & 528 & 1592 & 69 & 237.6 & 7.9 & 23.1 & 31.1\\
256 & 58k & 155k & 1235 & 3747 & 96 & 5577.0 & 4.0 & 14.7 & 18.8\\
512 & 29k & 207k & 2699 & 8256 & 106 & 1658.1 & 2.2 & 8.3 & 10.5\\
1024 & 15k & 271k & 5711 & 17581 & 119 & 11554.5 & 2.1 & 7.0 & 9.2\\\hline
\end{tabular}
\end{center}
\caption{\label{tab:melechov_large_strong_scaling}
Strong scaling test for the problem of the \emph{Melechov locality}
containing 2D and 3D elements, size of the global problem is $n$ = 15M
unknowns.}
\end{table}

\begin{figure}[ptbh]
\begin{center}
\includegraphics[width=0.48\textwidth]{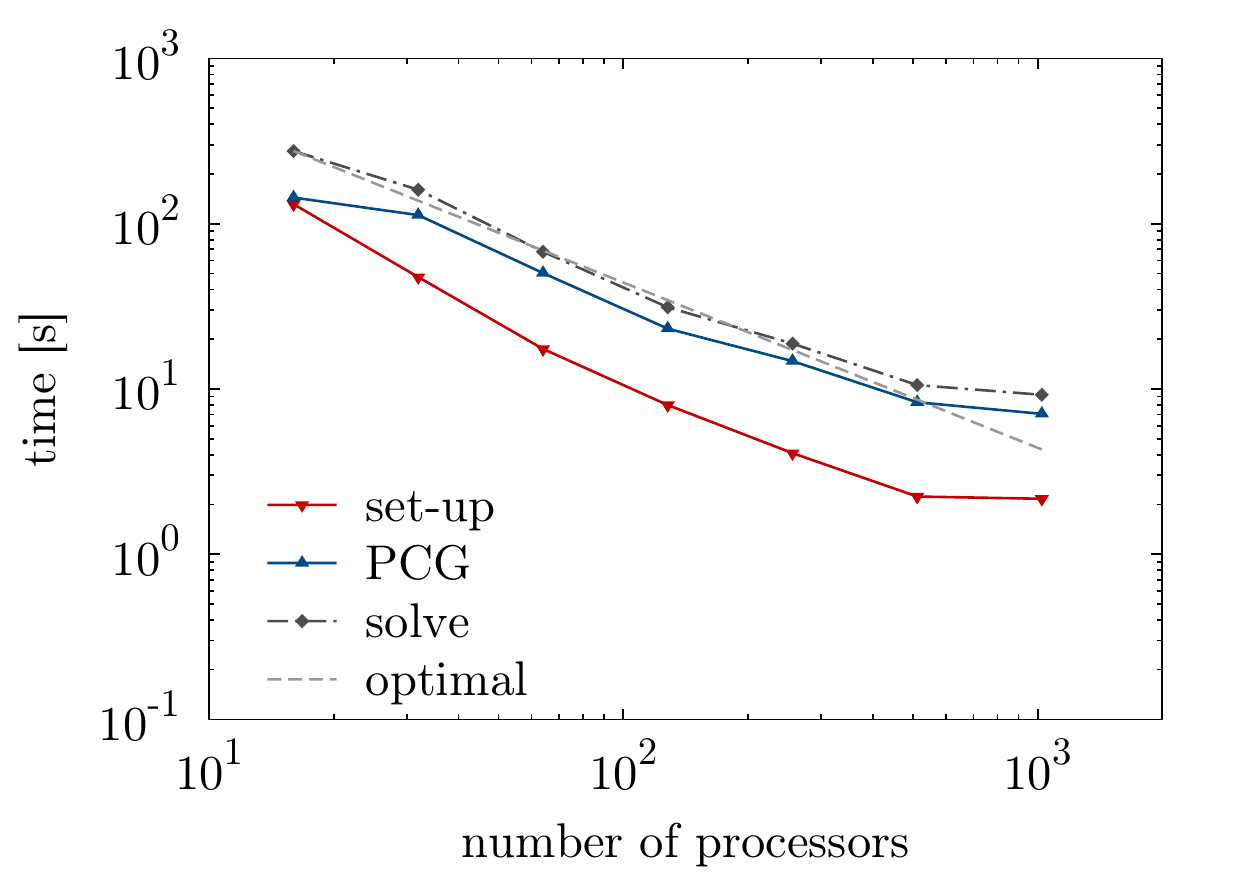} 
\includegraphics[width=0.48\textwidth]{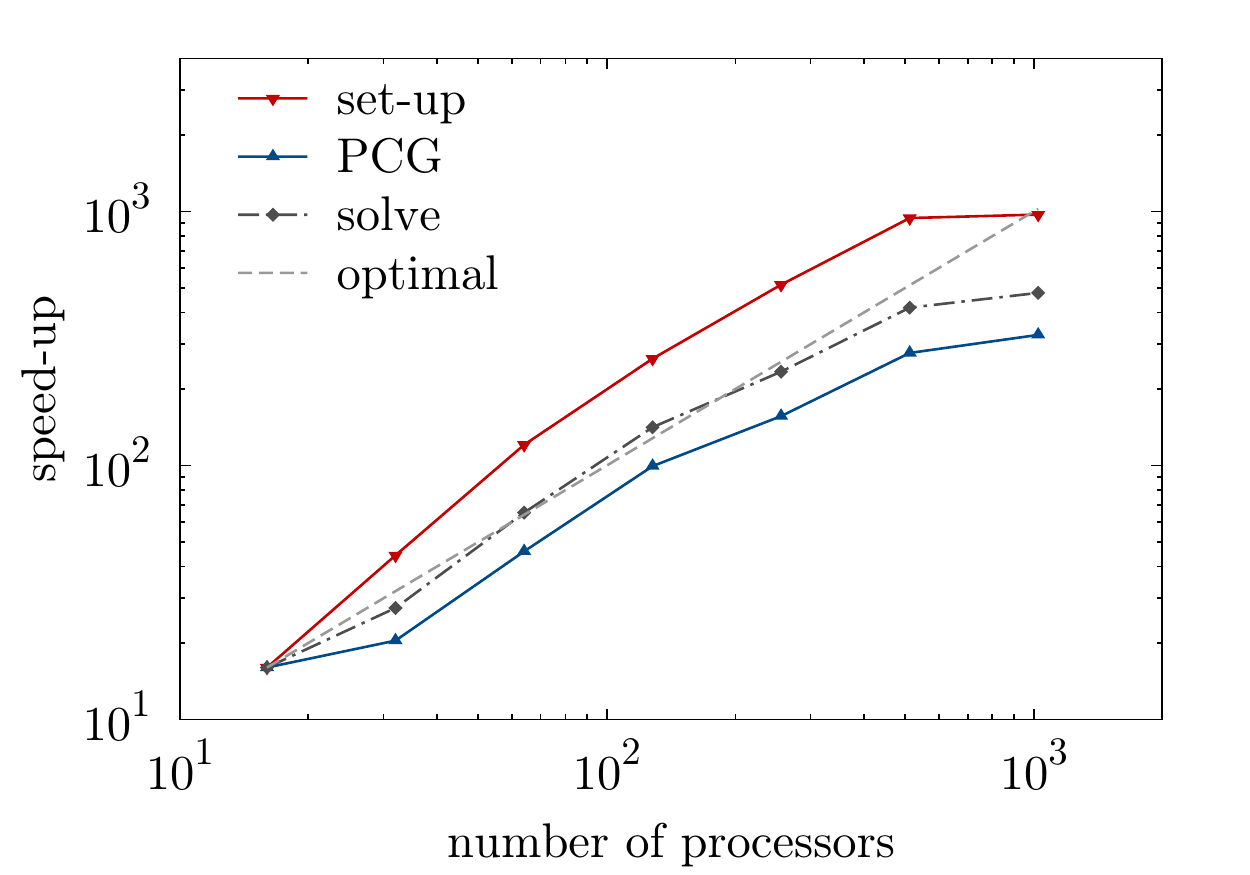}
\end{center}
\caption{\label{fig:timing_Melechov_large}
Strong scaling test for the problem of the \emph{Melechov locality}
containing 2D and 3D elements and 15M unknowns, computational time (left) and
speed-up (right) separately for set-up and PCG phases, and their sum (solve).}
\end{figure}

The second engineering model is the locality around the \emph{Bed\v{r}ichov tunnel}. 
The main purpose of this 2.1 km long tunnel 
near the city of Liberec in the north of the Czech Republic
to accommodate water pipes, which supply the city 
by drinking water from a reservoir in the mountains.
However, this locality is also a valuable site for experimental geological measurements performed inside the tunnel.

The model aims at describing the flow in the granite rock surrounding the
tunnel. The computational mesh consists of 1.1 million elements leading to 7.8
million unknowns. The mesh with the plot of resulting piezometric head is
presented in Fig.~\ref{fig:tunel_1}. The system of fractures and an example
division into 256 substructures are visualised in Fig.~\ref{fig:tunel_2}. The
hydraulic conductivity of the fractures is 10$^{-7}$ ms$^{-1}$, while
the conductivity of the outer material is 10$^{-10}$ ms$^{-1}$, the transition
coefficient $\sigma_{3}$ = 1 s$^{-1}$, and the effective thickness of fractures
$\delta_{2}$ = 1.1 m.

Although the mesh contains fewer finite elements than the one of the
\emph{Melechov locality} model, this problem is considerably more complicated. This is
caused mainly by the presence of relatively very small and irregularly shaped finite
elements in the vicinity of the tunnel and near the cross-sections of
fractures (see Fig.~\ref{fig:tunel_3}) generated by the mesh generator.

The results of a strong scaling test are summarised in Table~\ref{tab:tunnel_strong_scaling}. 
As before, the times are also plotted in Fig.~\ref{fig:timing_tunnel}.
Although the number of iterations is not independent of the number of
substructures, the growth is still small. Consequently, the computing times,
and especially the time for set-up, scale very well over a large range of
numbers of substructures. The observed super-optimal scaling may be related to
faster factorisation of the smaller local problems by the direct solver for
indefinite matrices.

Table~\ref{tab:tunnel_corner_effect} summarises an experiment 
performed to analyse the effect of using corners in the construction of
the coarse space in BDDC. As has been mentioned in Section~\ref{sec:bddc},
using Raviart-Thomas finite elements does not lead to `natural' corners as
cross-points shared by several substructures. On the other hand, the notion of
corners was generalised to any selected interface degree of freedom, at which
continuity of functions from the coarse space is required. Such generalisation
is important for the well-posedness of the local problems for unstructured
meshes e.g. in elasticity analysis \cite{Sistek-2012-FSC}. This is also the
default option for \textsl{BDDCML}, in which selection of corners is performed
at each face between two substructures. 
Adding corners improves the approximation properties of the coarse space at the cost of
increasing the size of the coarse problem.
Table~\ref{tab:tunnel_corner_effect} compares the convergence for variable
number of substructures without and with constraints at corners. 
Column `with corners' in Table~\ref{tab:tunnel_corner_effect} corresponds to results in Table~\ref{tab:tunnel_strong_scaling},
and it is repeated for comparison.
We can see
that while the effect of corners on convergence is small for smaller number of
substructures, the improvement of the coarse problem and the approximation of
BDDC becomes more significant for higher numbers of cores. Looking at times in
Table~\ref{tab:tunnel_corner_effect}, the additional time spent in the set-up
phase due to higher number of constraints when using corners is 
compensated by the lower number of PCG iterations, resulting in lower overall
times. Thus, using additionally selected corners appears beneficial for
complicated engineering problems like this one.

\begin{figure}[ptbh]
\begin{center}
\begin{overpic}[width=0.7\textwidth]{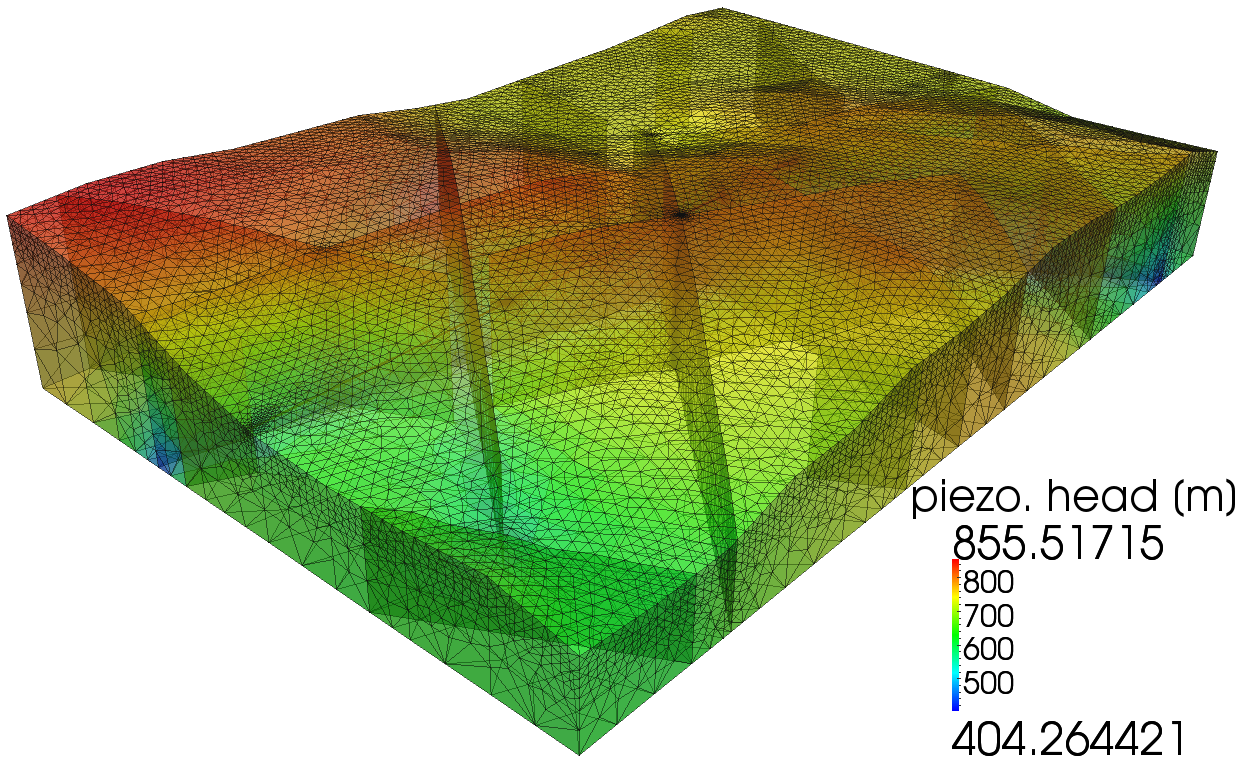}
\put(2,12){1754 m (width)}
\put(16,57){2655 m (length)}
\put(98,43){458 m (height)}
\end{overpic}
\end{center}
\caption{\label{fig:tunel_1}
The \emph{Bed\v{r}ichov tunnel} problem containing 2D and 3D
elements, mesh contains 1.1M elements and 7.8M unknowns. Plot of the
piezometric head. Data by courtesy of Dalibor Frydrych.}
\end{figure}

\begin{figure}[ptbh]
\begin{center}
\includegraphics[width=0.48\textwidth]{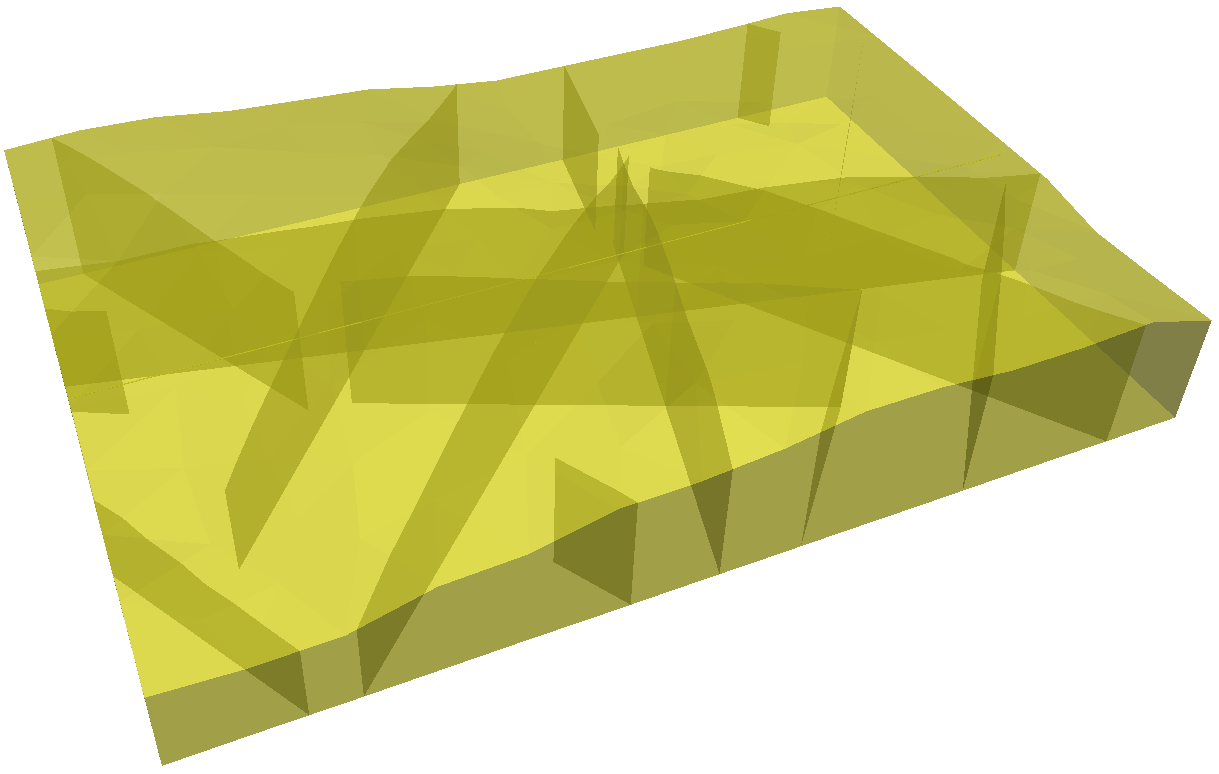} 
\includegraphics[width=0.48\textwidth]{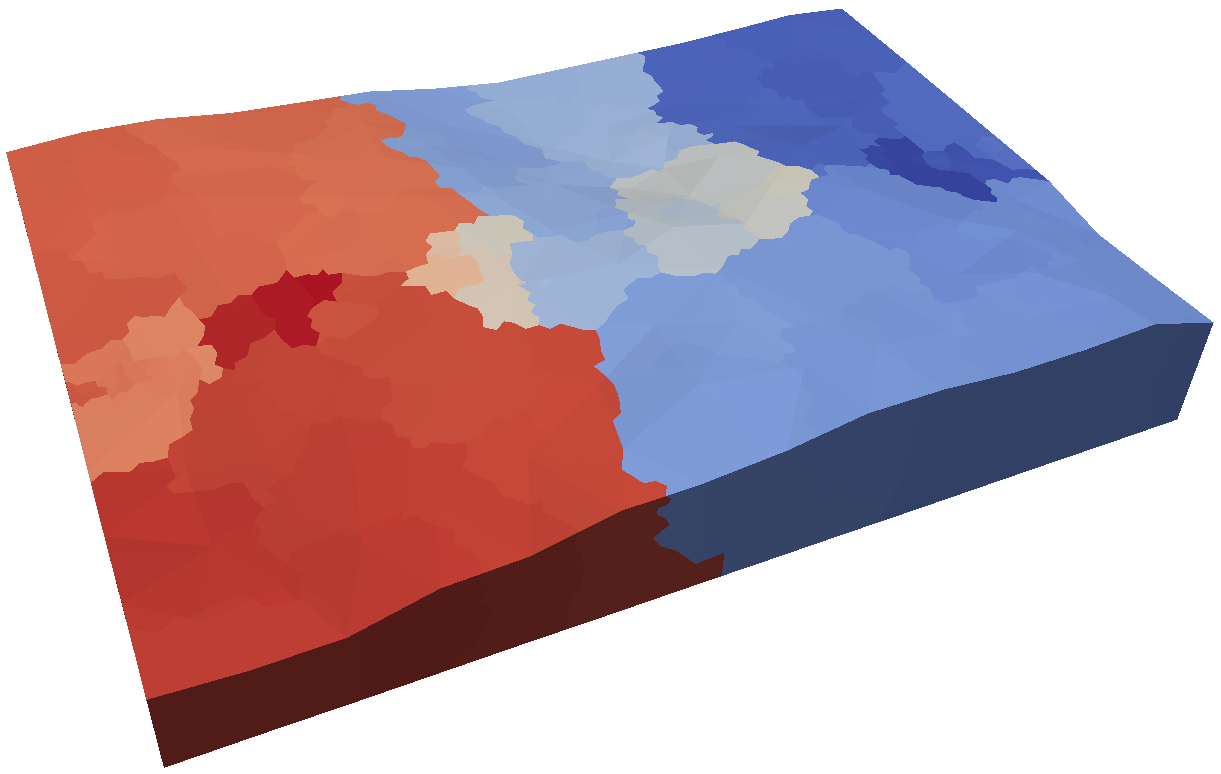}
\end{center}
\caption{\label{fig:tunel_2}
The \emph{Bed\v{r}ichov tunnel} problem; the system of fractures
(left) and an example division into 256 substructures (right).}
\end{figure}

\begin{figure}[ptbh]
\begin{center}
\includegraphics[width=0.455\textwidth]{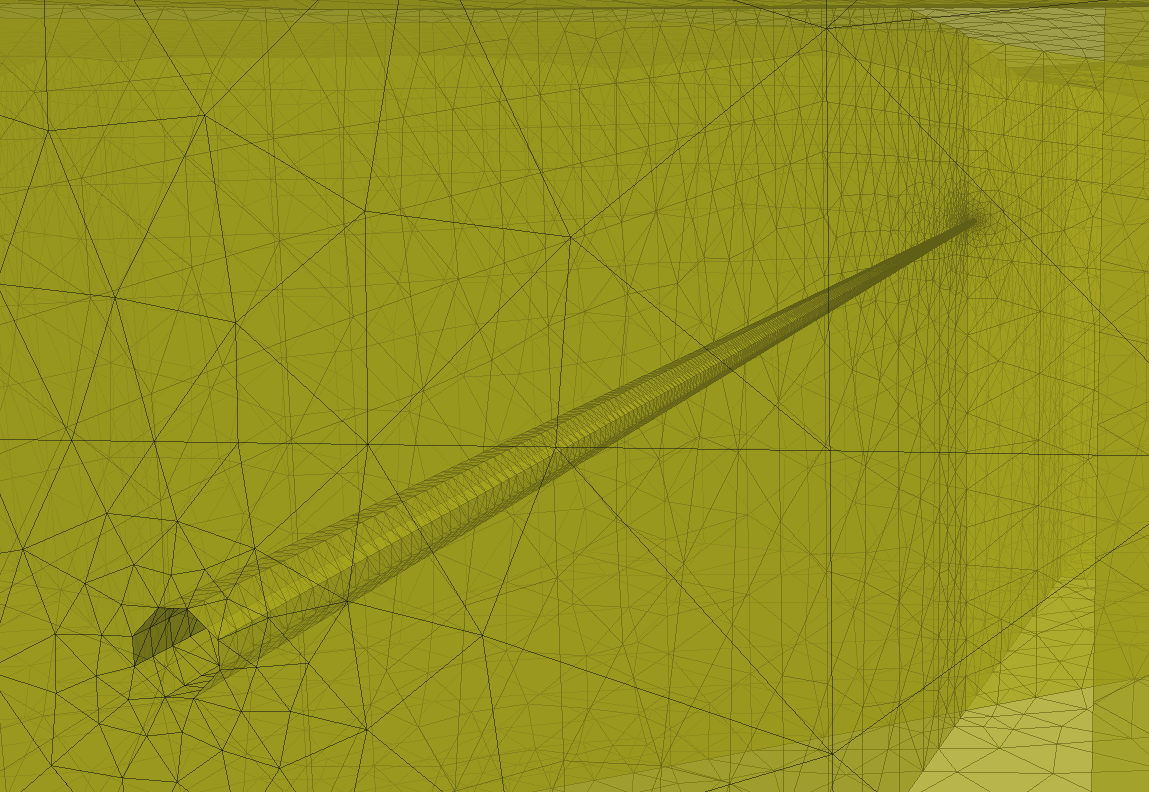} \ 
\includegraphics[width=0.48\textwidth]{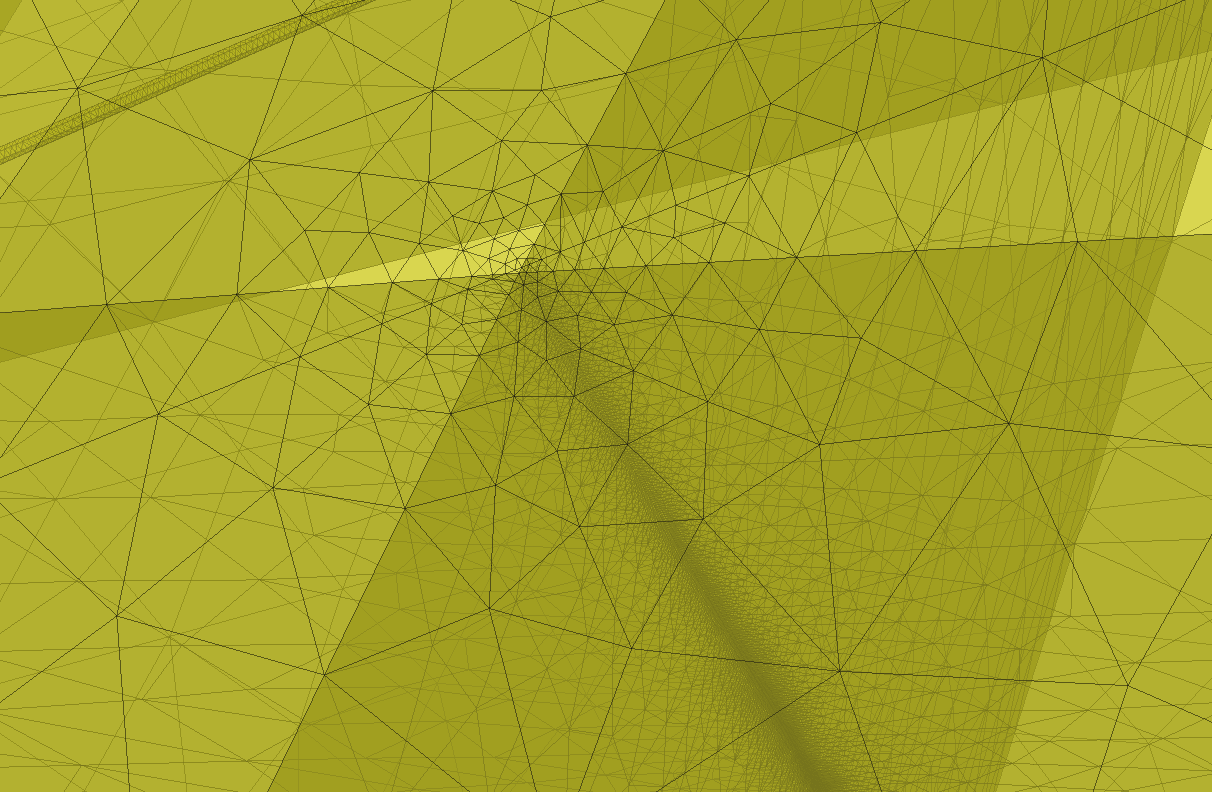}
\end{center}
\caption{\label{fig:tunel_3}
Example of difficulties in the mesh of the \emph{Bed\v{r}ichov
tunnel} problem; detail of the tunnel geometry with fine elements (left) and
enforced refinement at an intersection of fractures (right).}
\end{figure}

\begin{table}[pth]
\begin{center}
\begin{tabular}
[c]{|cc|ccc|cc|ccc|}\hline
\multirow{2}{*}{$N$} & \multirow{2}{*}{$n/N$} & \multirow{2}{*}{$n_{\Gamma}$} & \multirow{2}{*}{$n_f$} & \multirow{2}{*}{$n_c$} & 
\multirow{2}{*}{its.} & \multirow{2}{*}{cond.} & \multicolumn{3}{c|}{time (sec)} \\
&  &  &  &  &  &  & set-up & PCG & solve \\\hline 
32 & 245k & 20k & 106 & 322     & 112 & 1514.1 & 110.3 & 144.0 & 254.3\\
64 & 123k & 28k & 192 & 597     & 63  & 117.7  & 42.2  & 36.0  & 78.3\\
128 & 61k & 45k & 413 & 1293    & 75  & 194.4  & 13.4  & 16.8  & 30.3\\
256 & 31k & 72k & 902 & 2791    & 119 & 526.7  & 4.2   & 10.9  & 15.1\\
512 & 15k & 110k & 2009 & 6347  & 137 & 1143.4 & 1.8   & 7.1   & 9.0\\
1024 & 8k & 155k & 4575 & 14725 & 173 & 897.0  & 1.6   & 8.0   & 9.7\\\hline
\end{tabular}
\end{center}
\caption{\label{tab:tunnel_strong_scaling}
Strong scaling test for the problem of the \emph{Bed\v{r}ichov
tunnel} containing 2D and 3D elements, size of the global problem is $n$ = 7.8M unknowns.}
\end{table}

\begin{figure}[ptbh]
\begin{center}
\includegraphics[width=0.48\textwidth]{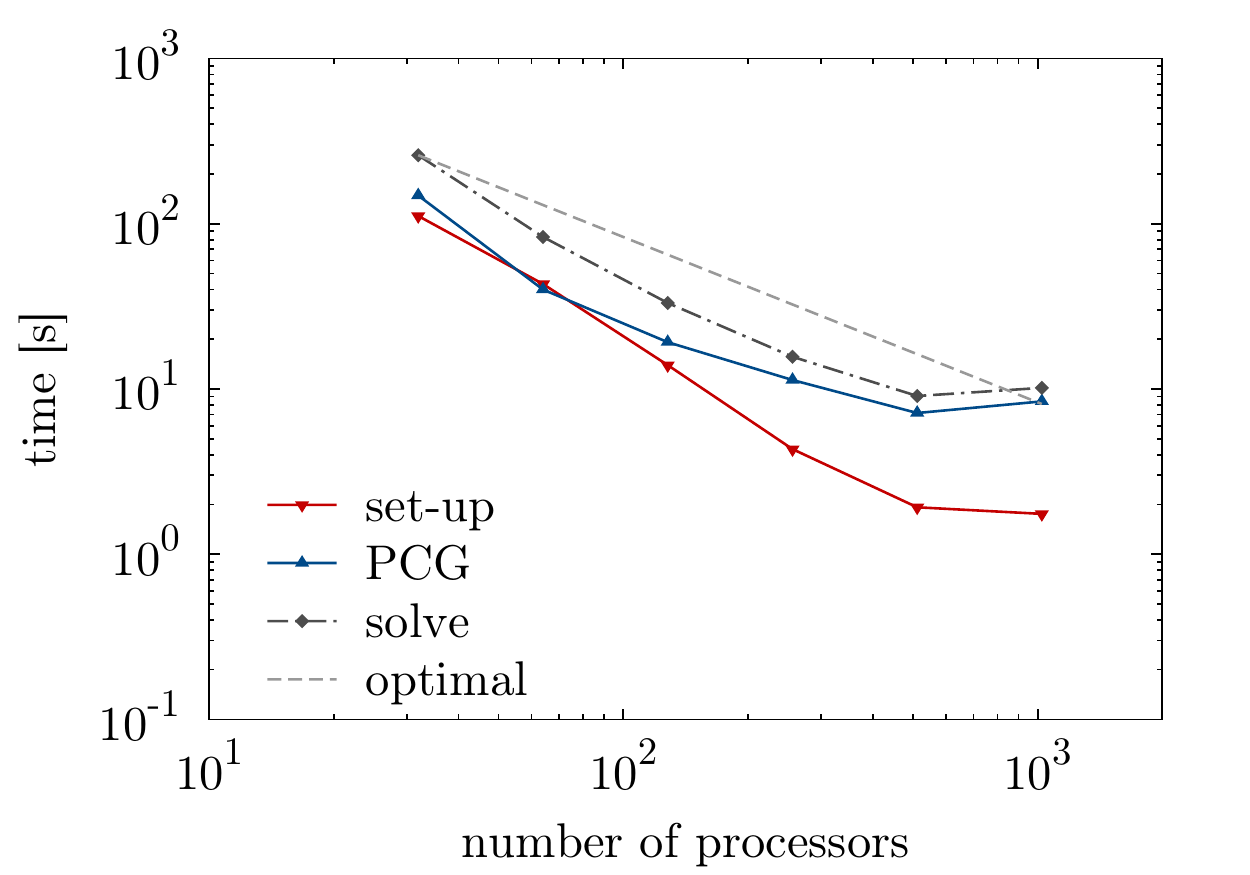} 
\includegraphics[width=0.48\textwidth]{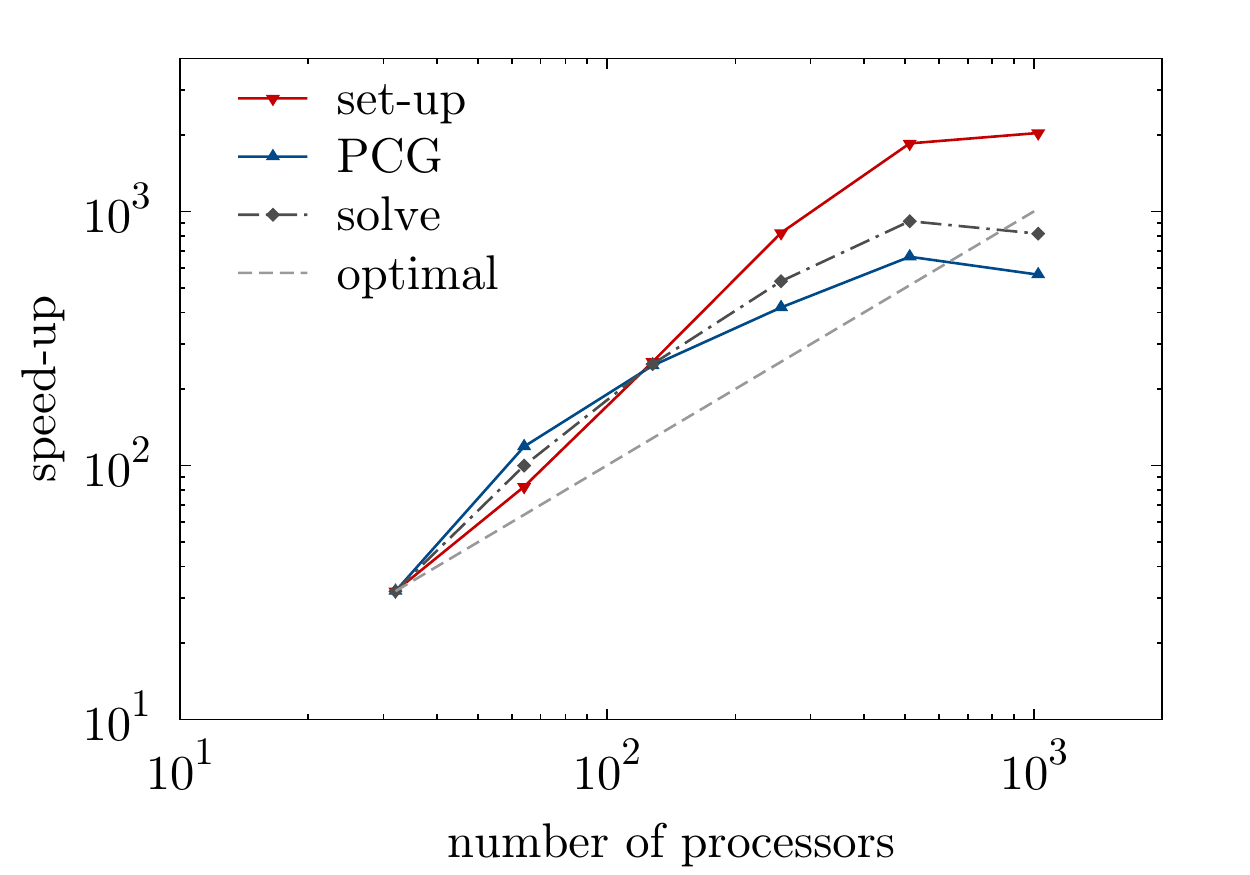}
\end{center}
\caption{\label{fig:timing_tunnel}
Strong scaling test for the problem of the \emph{Bed\v{r}ichov
tunnel} containing 2D and 3D elements and 7.8M unknowns, computational time
(left) and speed-up (right) separately for set-up and PCG phases, and their
sum (solve).}
\end{figure}

\begin{table}[pth]
\begin{center}
\begin{tabular}
[c]{|cc|ccccc|ccccc|}\hline
\multirow{3}{*}{$N$} & \multirow{3}{*}{$n/N$} & \multicolumn{5}{c|}{without corners} & \multicolumn{5}{c|}{with corners}\\
                     &                        & \multirow{2}{*}{its.}  & \multirow{2}{*}{cond.} & \multicolumn{3}{c|}{time (sec)} & 
                                                \multirow{2}{*}{its.}  & \multirow{2}{*}{cond.} & \multicolumn{3}{c|}{time (sec)}\\
   &      &     &        & set-up & PCG & solve &  &  & set-up & PCG & solve\\\hline
32 & 245k & 131 & 1789.6 & 107.5 & 175.0 & 282.5 & 112 & 1514.1 & 110.3 & 144.0 & 254.3\\
64 & 123k & 70 & 122.2 & 40.3 & 40.4 & 80.7 & 63 & 117.7 & 42.2 & 36.0 & 78.3\\
128 & 61k & 96 & 208.5 & 10.9 & 21.6 & 32.6 & 75 & 194.4 & 13.4 & 16.8 & 30.3\\
256 & 31k & 139 & 541.9 & 3.7 & 12.5 & 16.2 & 119 & 526.7 & 4.2 & 10.9 & 15.1\\
512 & 15k & 197 & 1418.9 & 1.4 & 10.0 & 11.4 & 137 & 1143.4 & 1.8 & 7.1 & 9.0\\
1024 & 8k & 312 & 3779.4 & 1.0 & 14.5 & 15.6 & 173 & 897.0 & 1.6 & 8.0 & 9.7\\\hline
\end{tabular}
\end{center}
\caption{\label{tab:tunnel_corner_effect}
Effect of using corners. Problem of the \emph{Bed\v{r}ichov tunnel} 
containing 2D and 3D elements, size of the global problem is $n$ =
7.8M unknowns. In column `without corners', no additional corners are selected
in BDDC. In column `with corners', additional corners are selected.}
\end{table}

In the final experiment, we compare the effect of different averaging
techniques on the convergence of BDDC. In Table~\ref{tab:tunnel_averaging},
results of the strong scaling test for arithmetic averaging~(\ref{eq:scaling_arithmetic}), 
the modified $\rho$-scaling~(\ref{eq:scaling_rho}), 
and the proposed scaling by diagonal stiffness~(\ref{eq:scaling_diagonal}) are summarised. 
The final column corresponds to
the results from Table~\ref{tab:tunnel_strong_scaling}, which are repeated here for comparison.

Table~\ref{tab:tunnel_averaging} suggests, that while the simple arithmetic
averaging does not lead to satisfactory convergence for this problem, the
modified $\rho$-scaling and the diagonal scaling mostly lead to similar
convergence. However, while the former provides slightly better convergence
for several cases, it also leads to irregularities for certain divisions, for
which the BDDC method with this averaging converges rather poorly. Therefore,
the proposed scaling (\ref{eq:scaling_diagonal}) can be recommended as the
most robust choice among the three tested options.

\begin{table}[ptbh]
\begin{center}
\begin{tabular}
[c]{|cc|ccc|cc|cc|cc|}\hline
\multirow{2}{*}{$N$} & \multirow{2}{*}{$n/N$} &
\multirow{2}{*}{$n_{\Gamma}$} & \multirow{2}{*}{$n_f$} &
\multirow{2}{*}{$n_c$} & \multicolumn{2}{c|}{arithmetic avg.} &
\multicolumn{2}{c|}{mod. $\rho$-scal.} & \multicolumn{2}{c|}{diagonal scal.}\\
&  &  &  &  & its. & cond. & its. & cond. & its. & cond.\\\hline
32 & 245k & 20k & 106 & 322 & 637 & 9811.7 & 110 & 1467.8 & 112 & 1514.1\\
64 & 123k & 28k & 192 & 597 & 618 & 10254.1 & 62 & 115.1 & 63 & 117.7\\
128 & 61k & 45k & 413 & 1293 & 2834 & 1.0e+11 & 206 & 401641.4 & 75 & 194.4\\
256 & 31k & 72k & 902 & 2791 & 799 & 11172.9 & 117 & 512.9 & 119 & 526.7\\
512 & 15k & 110k & 2009 & 6347 & 883 & 15449.6 & 136 & 1160.1 & 137 & 1143.4\\
1024 & 8k & 155k & 4575 & 14725 & n/a & 2.5e+10 & 504 & 99023.6 & 173 &
897.0\\\hline
\end{tabular}
\end{center}
\caption{\label{tab:tunnel_averaging}
Comparison of different averaging techniques for the
\emph{Bed\v{r}ichov tunnel} containing 2D and 3D elements, size of the global
problem is $n$ = 7.8M unknowns.}
\end{table}

\section{Conclusion}

\label{sec:conclusion}

A parallel solver for the mixed-hybrid finite element formulation based on Darcy's law has
been presented. The software combines an existing package \textsl{Flow123d}
developed for problems in geophysics with \textsl{BDDCML}, a parallel library
for solution of systems of linear algebraic equations by the BDDC method.

In geoengineering applications, the mathematical model is applied to
geometries with the presence of fractures. In the present approach, the flow
in these fractures is also modelled by Darcy's law, although the hydraulic
conductivity of the porous media is considered by orders of magnitude higher.
These fractures are modelled by finite elements of a lower dimension. In the
discretised model, 1D, 2D and 3D finite elements are coupled together through
Robin boundary conditions. These coupling terms lead to a modification of the
usual saddle-point matrix of the system, in which a new non-zero block appears
on the diagonal.

The BDDC method is employed for the solution of the resulting system of linear
algebraic equations. BDDC is based on iterative substructuring, in which the
problem is first reduced to the interface among substructures. The Schur
complement is not built explicitly. Instead, only multiplications by the
matrix are performed through solving a discrete Dirichlet problem on each
substructure. In the setting of the mixed-hybrid problem, the interface is
built only as a subset of the block of Lagrange multipliers, while remaining
unknowns belong to interiors of substructures. Although the original problem
is symmetric indefinite, the system reduced to the interface is symmetric
positive definite. This is also shown to hold for the case with fractures in
the present paper. Consequently, the PCG method is used for the solution of the
reduced problem. However, unlike the symmetric positive definite problems,
a direct solver for symmetric indefinite matrices needs to be used for
the factorisation and repeated solution of local problems on substructures.

One step of the BDDC method is used as the preconditioner for the PCG method
run on the interface problem. A modification of the diagonal stiffness scaling
has been introduced. It is motivated by difficult engineering problems,
for which it performs significantly better than other two applicable
choices---the arithmetic averaging and the modified $\rho$-scaling. Arithmetic
averages over faces between substructures are used as the basic constraints
defining the coarse space. 
In addition, corners are selected 
from unknowns at the interface 
using the face-based algorithm.
While corners are not
required by the theory, they are shown to improve both the convergence and the
computational times for complicated problems.

The performance of the resulting solver has been investigated on three
benchmark problems in 2D and 3D. Both weak and strong scaling tests have been
performed. On benchmark problems with single mesh dimension, 
the expected optimal convergence
independent of number of substructures has been achieved. Correspondingly, the
resulting parallel scalability has been nearly optimal for the weak scaling
tests up to 64 computer cores.

The strong scaling tests were presented for a benchmark problem of a unit cube
and for two engineering problems containing large variations in element sizes
and hydraulic conductivities, using up to 1024 computer cores and containing
up to 15 million degrees of freedom. The convergence for the unit cube problem
with all three possible dimensions of finite elements slightly deteriorated by
using more substructures, and this translated to sub-optimal parallel
performance. However, for the two engineering applications, in which only 3D
and 2D elements are combined, the BDDC method has also maintained good convergence
properties with the growing number of substructures, resulting in optimal, or
even super-optimal parallel scalability of the solver. It has been also shown 
that the proposed modification of the diagonal stiffness scaling plays an important role
in achieving such independence for the challenging engineering problems
presented in the paper.

\ack
The authors are grateful to Ji{\v r}ina Kr{\' a}lovcov{\' a} and Dalibor
Frydrych for providing the geoengineering models.


\end{document}